\renewcommand{\geq}{\geqslant}
\renewcommand{\leq}{\leqslant}
\numberwithin{equation}{section}
\theoremstyle{THkey}\newtheorem{theoremofothers}{}}
\newcounter{compteur}
\newcounter{hypothese}
\newcounter{propo}
\theoremstyle{plain}
\newtheorem{proposition}[propo]{Proposition}
\newtheorem{theorem}[compteur]{Theorem}
\newtheorem{lemma}[propo]{Lemma}
\newtheorem{corollary}[compteur]{Corollary}
\theoremstyle{definition}
\newtheorem{definition}[hypothese]{Definition}
\theoremstyle{remark}
\newtheorem{remark}{Remark}
\newtheorem{example}{Example}
\newcommand{\g}{\mathfrak{g}}
\newcommand{\R}{\mathbb{R}}
\newcommand{\rt}{\rightarrow}
\newcommand{\Rn}{\mathbb{R}^p}
\newcommand{\co}{\operatorname{co}}
\newcommand{\dom}{\operatorname{dom}}
\newcommand{\K}{\mathcal{K}}
\newcommand{\N}{\mathbb{N}}
\newcommand{\Esp}{\mathbb{E}_{\sigma,\tau,\omega_1}}
\newcommand{\NEW}[1]{{\em #1}}
\begin{document}


\title{\sc  Definable zero-sum stochastic games}

\author{
J\'{e}r\^{o}me BOLTE\footnote{TSE (GREMAQ, Universit\'e Toulouse Capitole), Manufacture des Tabacs, 21 all\'ee de Brienne, 31015 Toulouse Cedex 5, France. email: {\tt jerome.bolte@tse-eu.fr}} , \; St\'ephane GAUBERT \footnote{INRIA \& Centre de Math\'ematiques Appliqu\'ees (CMAP), UMR 7641, \'Ecole Polytechnique, 91128
Palaiseau, France. email: {\tt Stephane.Gaubert@inria.fr}
} \, \& Guillaume VIGERAL\footnote{Universit\'e Paris-Dauphine, CEREMADE, Place du Mar\'echal De Lattre de Tassigny. 75775 Paris
cedex 16, France. email: {\tt guillaumevigeral@gmail.com} }}
\maketitle
\renewcommand{\thefootnote}{}\footnotetext{
The first and second author were partially supported by the PGMO Programme of Fondation Math\'ematique Jacques Hadamard and EDF. The third author was partially supported by the french Agence Nationale de la Recherche (ANR) "ANR JEUDY: ANR-10-
BLAN 0112."
This work was co-funded by the European
Union under the 7th Framework Programme ``FP7-PEOPLE-2010-ITN'',
grant agreement number 264735-SADCO. }
\renewcommand{\thefootnote}{\arabic{footnote}}

\begin{abstract} Definable zero-sum stochastic games involve a finite number of states and action sets, reward and transition functions that are definable in an o-minimal structure. Prominent examples of such games are finite, semi-algebraic or globally subanalytic  stochastic games. 

We prove that the Shapley operator of any definable stochastic game with separable transition and reward  functions is  definable in the same structure. Definability in the same structure does not hold systematically: we  provide a  counterexample of a stochastic game with semi-algebraic data yielding a non semi-algebraic  but globally subanalytic Shapley operator.

Our definability results on Shapley operators are used to prove that any separable definable game has a uniform value; in the case of polynomially bounded structures we also provide convergence rates. Using an approximation procedure, we actually establish that general zero-sum games with separable definable transition functions have  a uniform value. These results highlight the key role played by the tame structure of transition functions.  As particular cases of our main results, we obtain that stochastic games with polynomial transitions, definable games with finite actions on one side, definable games with perfect information or switching controls have a uniform value. Applications to nonlinear maps arising in risk sensitive control and Perron-Frobenius theory are also given.
\end{abstract}

\medskip

\noindent
{\bf Keywords} Zero-sum stochastic games, Shapley operator, o-minimal structures, definable games, uniform value, nonexpansive mappings, nonlinear Perron-Frobenius theory, risk-sensitive control, tropical geometry.

\section{Introduction}

Zero-sum stochastic games have been widely studied since their introduction by Shapley \cite{Shap53} in 1953 (see the  textbooks \cite{sorin02,FiVr97,MSZ,NeSo03} for an overview of the topic). They model long term  interactions between two players with completely opposite interest; they appear in a wealth of domains including computer science, population dynamics or economics. In such games the players face, at each time $n$, a zero-sum game whose data are determined by the state of nature. The evolution of the game is governed by a stochastic process which is partially controlled by both players through their actions, and which determines, at each stage of the game, the state of nature and thus the current game faced by both players. We assume that  the players know the payoffs functions, the underlying stochastic process and the current state; they also observe at each stage the actions played by one each other. They  aim at optimizing their gain over time. This objective depends on specific choices of payoff evaluations and in particular on the choice of a distribution of discount/weighting factors over time.

We shall focus here on two kinds of payoff evaluations which are based on Ces\`aro and Abel means. For any finite horizon time $n$, one defines the ``repeated game" in $n$ stages for which each player aims at optimizing his averaged gain over the frame time $t=1,\ldots,n$. Similarly for any discount rate $\lambda$, one defines the $\lambda$-discounted game for infinite horizon games. Under  minimal assumptions these games have values, and an important issue in Dynamic Games theory is the asymptotic study of these values (see Subsection~\ref{Defsto}).
These aspects have been dealt along two lines:
\begin{itemize}
\item The ``asymptotic approach" consists in the study of the convergence of these values when the players become more and more patient -- that is when $n$ goes to infinity or $\lambda$ goes to 0.
\item The ``uniform value approach", for which one seeks to establish  that, in addition, both players have near optimal strategies that do not depend on the horizon (provided that the game is played long enough).
\end{itemize}

The asymptotic approach is less demanding as there are games \cite{Zamir73} with no uniform value but for which the value does converge to a common limit; the reader is referred to \cite{MSZ} for a thorough discussion on those two approaches and their differences in zero-sum repeated games.

For the asymptotic approach, the first positive results were  obtained in recursive games \cite{Everett57}, games with incomplete information \cite{AuMa95,MeZa71} and absorbing games \cite{Kohlberg74}. In 1976, Bewley and Kohlberg settled, in a fundamental paper \cite{BewlKohl76}, the case of games with finite sets of states and actions. Their proof is based on the observation that the discounted value, thought of as a function of the discount factor, is semi-algebraic,
and that it has therefore a Puiseux series expansion.

Bewley-Kohlberg's result of convergence was later considerably strengthened by Mertens and Neyman who proved \cite{MertNeym81} the existence of a uniform value in this finite framework. Several types of improvements based on techniques of semi-algebraic geometry were developed in \cite{Ney03,Milman02}. Algorithms using an effective version of the Tarski-Seidenberg theorem were recently designed in order to compute either the uniform value \cite{CMH2008} or $\epsilon$-optimal strategies \cite{solanvieille2010}. 

The semi-algebraic techniques used in the proof of Bewley and Kohlberg have long been considered as specifically related to the finiteness of the action sets and it seemed that they  could not be adapted to wider settings. In \cite{Sha08}  the authors consider a special instance of polynomial games but their focus is computational and concerns mainly the estimation of discounted values for a fixed discount rate. In order to go beyond  their result and to tackle more complex games, most researchers have used topological or analytical arguments, see e.g.\ \cite{MNR2009,Renault2012,Rosenberg2000,RoSo01,RoVi2000,Sorin03,Sorin04,SoVitoappear}. The common feature of most of these papers is to study the analytical properties of the so-called Shapley operator of the game in order to infer various convergence results of the values.   This protocol, called the ``operator approach" by Rosenberg and Sorin, grounds on Shapley's theorem which ensures that the dynamic structure of the game is entirely represented by the Shapley operator.
\smallskip

Our paper can be viewed as a ``definable operator approach". In the spirit of Bewley-Kohlberg and Neyman, we identify first a class of potentially ``well-behaved games" through their underlying geometric features (definable stochastic games) and we investigate what this features imply for the Shapley operator (its definability and subsequent properties). By the use of Mertens-Neyman result this implies in turn the existence of a uniform value for a wide range of games (e.g.\ polynomial games). 

Before giving a more precise account of our results, let us describe briefly the topologi\-cal/geo\-me\-tri\-cal framework used in this paper. The rather recent introduction of o-minimal structures as models for a tame topology  (see \cite{Dries98}) is a major source of inspiration to this work. O-minimal structures can be thought of as
an abstraction of semi-algebraic geometry through an axiomatization of its most essential properties. An o-minimal structure consists indeed in a collection of subsets belonging to spaces of the form $\R^n$, where $n$ ranges over $\N$, called {\em definable sets}~(\footnote{Functions are called definable whenever their graph is definable.}). Among other things, this collection is required to be stable by linear projections and its ``one-dimensional" sets must be finite unions of intervals. Definable sets are then shown to share most of the qualitative properties of semi-algebraic sets like finiteness of the number of connected components or differential regularity up to stratification.

Our motivation for studying stochastic games in this framework is double. First, it appears that definability allows one to avoid highly oscillatory phenomena in a wide variety of settings: partial differential equations  \cite{Simon83}, Hamilton-Jacobi-Bellman equations and control theory (see \cite{Trelat06} and references therein), continuous optimization \cite{Ioff09}. We strongly suspect that definability is a simple means to ensure the existence of a value to stochastic games.

Another very  important motivation for working within these structures is their omnipresence in finite-dimensional models and applications~(see e.g.\ \cite{Ioff09} and the last section).

\smallskip

The aim of this article is therefore to consider stochastic games --with a strong focus on their asymptotic properties-- in this o-minimal framework. We always assume that  the set of states is finite and we say that a stochastic game is definable in some o-minimal structure if all its data (action sets, payoff and transition functions) are definable in this structure. The   central issue behind this work is probably:
\begin{itemize}
\item[ ($\mathcal{Q}$)] {\em Do definable stochastic games have definable Shapley operators }?
\end{itemize}
As we shall see this question plays a pivotal role in the study of stochastic games. It seems however difficult to solve it in its full generality and we are only able to give here partial results. We prove in particular that any stochastic game with definable, separable reward and transition functions (e.g.\ polynomial games) yields a Shapley operator which is definable  in the same structure. The separability assumption is important to ensure  definability in the same structure, we indeed describe a  rather simple semi-algebraic game whose Shapley operator  is globally subanalytic but not semi-algebraic. The general question of knowing whether a definable game has a Shapley operator definable in a possibly larger structure remains fully open.

An important consequence of the definability of the Shapley operator is the existence of a uniform value for the corresponding game (Theorem~\ref{conv}). The proof of this result is both based on the techniques and results of \cite{Ney03} and \cite{MertNeym81}. For games having a Shapley operator definable in a polynomially bounded structure, we also show, in the spirit of Milman \cite{Milman02}, that  the rate of convergence is of the form $O(\frac{1}{n^{\gamma}})$ for some positive $\gamma$.

 These results are used in turn to study games with arbitrary continuous reward functions (not necessarily definable), separable and definable transition functions and compact action sets. Using the Stone-Weierstrass and Mertens-Neyman theorems, we indeed establish that such games have a uniform value (Theorem~\ref{t:value}). This considerably generalizes previous results; for instance, our central results imply that:
\begin{itemize}
\item  definable games in which one player has finitely many actions,
\item games with polynomial transition functions,
\item games with perfect information and definable transition functions,
\item games with switching control and definable transition functions,
\end{itemize}
have a uniform value.


The above results  evidence that most of the asymptotic complexity of a stochastic game lies in its dynamics, i.e.\ in its transition function. This intuition has been reinforced by a recent follow-up work by Vigeral~\cite{Vipreprint} which shows, through a counterexample to  the convergence of values, that the o-minimality of the underlying stochastic process is a crucial assumption. The example involves finitely many states, simple compact action sets, and continuous transition and payoffs but the transition functions are typically non definable since they oscillate infinitely many times on a compact set.

We also include an application to a class of maps arising in risk sensitive control~\cite{fleming,hernandez,akian}
and in nonlinear Perron-Frobenius theory (growth minimization in population dynamics). In this context, one considers a self-map
$T$ of the interior of the standard positive cone of $\mathbb{R}^d$, and looks
for conditions of existence of the geometric growth rate
$[T^k(e)]_i^{1/k}$ as $k\to \infty$, where $e$ is an arbitrary vector
in the interior of this cone. This leads to examples of Shapley
operators, namely, the conjugates of $T$ by ``log-glasses'' (i.e., log-log coordinates), that are definable
in the log-exp structure. This is motivated also by tropical geometry~\cite{viro}. The latter can be thought of as a degenerate limit of classical geometry through log-glasses. This limit process is called
``dequantization''; the inverse process sends Shapley operators
to (non-linear) Perron-Frobenius operators.
This shows that the familiar o-minimal structure used in game theory,
consisting of real semi-algebraic sets, is not the only useful one
in the study of Shapley operators. We note that other o-minimal
structures, like the one involving absolutely converging Hahn series
constructed by van den Dries and Speisseger~\cite{vandendries98}, are also
relevant in potential applications to tropical geometry. \label{vdd}



The paper is structured as follows. The first sections give a basic primer on the theory of o-minimal structures and on stochastic games. We introduce in particular definable zero-sum stochastic games and discuss several subclasses of games. The main result of that section is the following: if the Shapley operator of a game is definable in an o-minimal structure, this game has a uniform value.  Since the Shapley operator is itself a one-shot game where the expectation of the future payoffs acts as a parameter, we study one-shot parametric games in Section~\ref{sec-one-shot}. We prove that the value of a parametric definable game is itself definable in two cases: either if the game is separable, or if the payoff is convex.
These results are in turn used in Section~\ref{sectionvaluesto} to prove the existence of a uniform value for several classes of games including separably definable games. We finally point an application to a class
of ``log-exp'' maps arising in population dynamics (growth minimization
problems) and in risk sensitive control.


\section{O-minimal structures}

O-minimal structures  play a fundamental role in this paper; we recall here the basic results that we shall use  throughout the article. Some references on the subject are
 van der Dries \cite{Dries98}, van\,der\,Dries-Miller \cite{Dries-Miller96}, Coste \cite{Coste99}.

For a given $p$ in $\N$, the collection of subsets of $\R^p$ is denoted by ${\mathcal P}(\R^p)$.
\begin{definition}[o-minimal structure, {\cite[Definition\,1.5]{Coste99}}]
\label{d:omin}
An \NEW{o-minimal} structure on $(\R,+,.)$ is a sequence of Boolean
algebras ${\cal O}=(\mathcal{O}_{p})_{p\in \N}$ with $\mathcal{O}_{p}\subset{\mathcal P}(\Rn)$, such that for each $p\in\N$:
\begin{enumerate}\itemsep=1mm
\item[(i)]
if $A$ belongs to $\mathcal{O}_{p}$, then $A\times\R$ and
$\R\times A$ belong to $\mathcal{O}_{p+1}$ ;
\item[(ii)]
if $\Pi:\R^{p+1}\rightarrow\Rn$ is the canonical
projection onto $\Rn$ then for any $A$ in $\mathcal{O} _{p+1}$,
the set $\Pi(A)$ belongs to $\mathcal{O}_{p}$ ;
\item[(iii)]
$\mathcal{O}_{p}$ contains the family of real algebraic
subsets of $\Rn$, that is, every set of the form
\[
\{x\in\Rn:g(x)=0\},
\]
where $g:\Rn\rightarrow\R$ is a real polynomial function ;
\item[(iv)]
the elements of $\mathcal{O}_{1}$ are exactly the finite
unions of intervals.
\end{enumerate}
\end{definition}

\smallskip

\noindent A subset of $\R^p$ which belongs to an o-minimal structure $\cal O$, is said to be definable in $\cal O$ or simply definable. A
mapping $F:S\subset \R^p\rt \R^q$ is called definable (in ${\cal O}$), if its graph $\{(x,y)\in\R^p\times\R^q:y\in F(x)\}$ is definable (in $\cal O$) as a subset of~$\R^p\times\R^q$.
Similarly if $g:\R^p\rightarrow (-\infty,+\infty]$ (resp.\ $g:\R^p\rightarrow [-\infty,+\infty)$) is  a real-extended-valued function, it is called definable (in $\mathcal O$), if its graph
$\{(x,r)\in\R^p\times \R: g(x)=r\}$ is definable (in $\mathcal O$).

\begin{remark}\label{R.tame}\noindent
The smallest o-minimal structure is given by the class $\cal SA$ of {\em
real semi-algebraic }objects(\footnote{This is due to
axiom (iii). Sometimes this axiom is weakened \cite{Dries98}, allowing smaller classes than ${\cal SA
}$, for instance the structure  of \em{semilinear
sets}.}). We recall that  a set $A \subset \R^p$ is called
semi-algebraic if it can be written as
$$A=\,\bigcup_{j=1}^{l}\,\bigcap_{i=1}^{k}\,\{\,x\in \R^p :
g_{ij}(x)=0,\, h_{ij}(x)<0\},$$ where the $g_{ij}, h_{ij} :
\R^p\rightarrow \R$ are real polynomial functions on $\R^p$. The fact
that ${\cal SA }$ is an o-minimal structure stems from the
Tarski-Seidenberg principle (see \cite{Boc98}) which asserts the validity of
item (ii) in this class.
\end{remark}

\smallskip
The following result is an elementary but fundamental consequence of the definition.
\begin{proposition}[{\cite{Dries-Miller96}}]\label{p:image} Let $A\subset \R^p$ and  $g:A\rightarrow\R^q$  be definable objects.\\
(i) Let $B\subset A$ a definable set. Then $g(B)$ is definable.\\
(ii) Let $C\subset\R^q$ be a definable set. Then $g^{-1}(C)$ is definable.
\end{proposition}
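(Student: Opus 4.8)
The plan is to derive both statements from the defining axioms alone: stability of each $\mathcal{O}_k$ under Cartesian multiplication by $\R$ (axiom (i)), under the canonical projection $\R^{k+1}\to\R^k$ forgetting the last coordinate (axiom (ii)), under finite Boolean operations, and the fact that $\mathcal{O}_k$ contains every real algebraic subset of $\R^k$ (axiom (iii)). The one non-mechanical ingredient is a preliminary lemma: \emph{for any definable $A\subset\R^m$ and any reordering of its coordinates the correspondingly permuted set is definable, and consequently the projection of $A$ onto any subset of its coordinates is definable.} Here the naive argument — form $\{(x,\sigma(x)):x\in A\}$ and project away the $x$-block — breaks down, since axiom (ii) only erases the \emph{last} coordinate, not a block sitting at the front. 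The fix is to place the permuted coordinates \emph{first}: writing points of $\R^m\times\R^m$ as $(y,x)$, one builds $\R^m\times A$ by iterating axiom (i), intersects it with the real algebraic set $\{(y,x):y_j=x_{\sigma(j)}\text{ for }j=1,\dots,m\}$ (definable by axiom (iii); it is the zero locus of the single polynomial $\sum_j(y_j-x_{\sigma(j)})^2$), obtaining the definable set $\{(x_{\sigma(1)},\dots,x_{\sigma(m)},x_1,\dots,x_m):x\in A\}$, and then applies the canonical projection $m$ times to reach the permuted set. The claim about arbitrary coordinate projections follows by first permuting the retained coordinates to the front.

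Granting this lemma, part (i) runs as follows. By definability of $g$, the graph $\graph(g)=\{(x,y)\in\R^p\times\R^q:y=g(x)\}$ lies in $\mathcal{O}_{p+q}$, and $B\times\R^q$ lies in $\mathcal{O}_{p+q}$ by iterating axiom (i) on the definable set $B$. Since $\mathcal{O}_{p+q}$ is a Boolean algebra, the intersection $\{(x,g(x)):x\in B\}$ is definable. Now $g(B)$ is exactly its image under the projection $\R^p\times\R^q\to\R^q$ onto the last $q$ coordinates, which is definable by the preliminary lemma (move the last $q$ coordinates to the front, then apply axiom (ii) $p$ times). Hence $g(B)\in\mathcal{O}_q$.

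Part (ii) is simpler and does not even invoke the permutation lemma. Again $\graph(g)\in\mathcal{O}_{p+q}$, while $\R^p\times C\in\mathcal{O}_{p+q}$ by iterating axiom (i) on $C$; their intersection is the definable set $\{(x,g(x)):x\in A,\ g(x)\in C\}=\{(x,g(x)):x\in g^{-1}(C)\}$. Its image under the canonical projection $\R^{p+q}\to\R^p$ — a $q$-fold composition of the projections of axiom (ii) — is precisely $g^{-1}(C)$, so $g^{-1}(C)\in\mathcal{O}_p$.

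The only genuinely delicate point is the permutation lemma, and within it the observation that the construction must be oriented so that every required projection strips coordinates from the top; everything else is routine bookkeeping with the Boolean and product axioms.
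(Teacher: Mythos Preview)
Your proof is correct and is precisely the standard argument; the paper itself does not supply a proof of this proposition but merely cites \cite{Dries-Miller96}, so there is nothing to compare against beyond noting that your derivation from the axioms (including the careful handling of coordinate permutations so that only last-coordinate projections are needed) is exactly how such references establish the result.
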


One can already guess from the above definition and proposition that definable sets behave qualitatively as semi-algebraic sets. The reader is referred to
\cite{Dries-Miller96,Coste99} for a comprehensive account on the
topic.

\begin{example}[max and min functions]\label{ex}
In order to illustrate these stability properties, let us consider nonempty subsets $A,B$ of $\R^p,\R^q$ respectively,  and  $g:A\times B\rightarrow \R$ a definable function. Note that the projection axiom applied on the graph of $g$ ensures the definability of both $A$ and $B$. Set $h(x)=\inf_{y\in B} g(x,y)$ for all $x$ in $A$ and let us establish the definability of $h$; note that the domain of $h$, i.e.\ $\dom h=\{x\in A:h(x)> -\infty\}$ may be smaller than $A$ and possibly empty. The graph of $h$ is given by
$$\mbox{graph}\, h:=\left\{ (x,r)\in A\times\R: \left(\forall y\in B, g(x,y)\geq r \right)\mbox{ and } \left(\forall \epsilon>0,\exists y\in B, g(x,y)<r+\epsilon \right) \right\}.$$

As explained below, the assertion
\begin{equation}\label{FO1} \left(\left(\forall y\in B, g(x,y)\geq r \right)\mbox{ and } \left(\forall \epsilon>0,\exists y\in B, g(x,y)<r+\epsilon \right)\right),
\end{equation} is called a first order definable formula, but the main point for the moment is to prove that such a formula necessarily describes a definable set.


Consider the sets
\[T=\left\{(x,r)\in A\times \R: \forall \epsilon>0,\exists y\in B, g(x,y)<r+\epsilon \right\},\]
 \[S_0=\left\{(x,y,r,\epsilon)\in A\times B\times \R\times (0,+\infty): g(x,y)-r-\epsilon<0\right\}.\]
$S_0$ is definable by Proposition~\ref{p:image}(ii). We wish to prove that $T$ is definable.
 Projecting $S_0$  via $\Pi(x,y,r,\epsilon)=(x,r,\epsilon)$, one obtains the definable set $S_1=\{(x,r,\epsilon)\in A\times \R\times (0,+\infty): \exists y \in B, g(x,y)-r-\epsilon<0\}.$  Introducing  $\Pi'(x,r,\epsilon)=(x,r)$, we see that    $T$ can be expressed as
 $$\left(A\times \R\right) \setminus \Pi'\left(E\right)$$
\noindent with $E:=\left(A\times \R \times (0,+\infty)\right)\setminus S_1$.
 Since the complement operations preserve definability,  $T$ is definable. Using this type of idea and Definition~\ref{d:omin}, we can prove similarly that
 $$T'=\{(x,r)\in A\times \R: \forall y\in B, g(x,y)\geq r  \}$$
 is definable. Hence $\mbox{graph}\, h=T\cap T'$ is definable and thus $h$ is definable.
\end{example}


The most common method to establish the definability of a set is thus to interpret it as the result of a finite sequence of basic operations on definable sets (projection, complement, intersection, union). This idea is conveniently captured by the notion of a first order definable formula (when no confusion can occurred we shall simply say first order formula). {\em First order definable formulas } are built inductively according to the following rules:
 \begin{itemize}
 \item If $A$ is a definable set, $x\in A$ is a first order definable  formula
 \item If $P(x_1,\ldots,x_p)$ and $Q(x_1,\ldots,x_q)$ are first order  definable formulas then
 ({$\mbox{not }P$}), ($P\mbox{ and }Q$), and ($P\mbox{ or }Q$) are first order definable formulas.
 \item Let $A$ be a definable subset of $\R^p$ and $P(x_1,\ldots,x_p,y_1,\ldots,y_q)$ a first order definable  formula then both
 $$\begin{array}{l}
 (\exists x\in A, P(x,y))\\
 (\forall x\in A, P(x,y))
 \end{array}$$
 are first order definable formulas.
 \end{itemize}
 Note that Proposition~\ref{p:image} ensures that $``g(x_1,\ldots,x_p)=0"$ or $`g(x_1,\ldots,x_p)<0"$ are first order  definable  formulas whenever $g:\R^p\rightarrow\R$ is definable (e.g.\ polynomial). Note also that \eqref{FO1} is, as announced earlier, a first order definable formula.\\
It is then easy to check, by induction, that:
\begin{proposition}[\cite{Coste99}]\label{p:FO1}
If $\Phi(x_1,\ldots,x_p)$ is a first order definable formula, then $\{(x_1,\ldots,x_p)\in\R^p:\Phi(x_1,\ldots,x_p)\}$ is a definable set.
\end{proposition}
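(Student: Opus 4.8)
The plan is to argue by structural induction on the way a first order definable formula is built from the three formation rules listed above. For a formula $\Phi$ whose free variables lie among $x_1,\dots,x_p$, I will write $[\Phi]$ for the set $\{(x_1,\dots,x_p)\in\R^p:\Phi(x_1,\dots,x_p)\}$, and the claim to establish at each step is $[\Phi]\in\mathcal O_p$. The recurring technical device is \emph{cylindrification}: iterating axiom~(i) of Definition~\ref{d:omin} adjoins a free coordinate to a definable set, and composing with a coordinate permutation --- a polynomial map, hence with algebraic graph, hence preserving definability by Proposition~\ref{p:image}(ii) --- lets one insert dummy free coordinates in any prescribed positions. So whenever a subformula mentions only some of the variables $x_1,\dots,x_p$, the set it defines can be viewed as a definable subset of $\R^p$, and one may always reduce to a common ambient space before performing Boolean operations.

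For the base case, I would note that an atomic formula is either ``$x\in A$'' with $A$ definable, whose set is $A$ (up to cylindrification), or ``$g(x)=0$'' or ``$g(x)<0$'' with $g$ definable, whose sets are $g^{-1}(\{0\})$ and $g^{-1}((-\infty,0))$ --- definable by Proposition~\ref{p:image}(ii) --- which again lie in $\mathcal O_p$ after cylindrification. For the Boolean step, assuming $[P],[Q]\in\mathcal O_p$ (after lifting both into the common space indexed by all free variables that occur), the sets $\R^p\setminus[P]$, $[P]\cap[Q]$ and $[P]\cup[Q]$ belong to $\mathcal O_p$ simply because each $\mathcal O_p$ is a Boolean algebra of subsets of $\R^p$; these realize $[\,\text{not }P\,]$, $[\,P\text{ and }Q\,]$ and $[\,P\text{ or }Q\,]$. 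For the quantifier step, given $\Phi(y)=(\exists x\in A,\ P(x,y))$ with $A\subset\R^p$ definable and $y\in\R^q$, the induction hypothesis yields $[P]\in\mathcal O_{p+q}$, intersecting with the cylinder $A\times\R^q$ stays in $\mathcal O_{p+q}$, and $[\Phi]$ is the image of $[P]\cap(A\times\R^q)$ under the projection that forgets the $x$-coordinates, hence definable by iterated application of axiom~(ii) (up to a coordinate permutation). The universal case I would reduce to this via $(\forall x\in A,\ P)\iff\text{not}\,(\exists x\in A,\ \text{not}\,P)$, using the complement and existential cases already in hand. Since these exhaust the formation rules, the induction closes.

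I do not expect a genuine obstacle here: the content is just the axioms of Definition~\ref{d:omin} together with Proposition~\ref{p:image}, and the only thing demanding care is the bookkeeping of free variables --- ensuring that before each Boolean operation the sets in play have been lifted to a common $\R^p$, and that the projections in the quantifier step act on the intended coordinates. The explicit treatment of $h(x)=\inf_{y\in B}g(x,y)$ in Example~\ref{ex} is precisely this mechanism run on a concrete formula, so the general proof amounts to organizing that computation as an induction on the structure of $\Phi$.
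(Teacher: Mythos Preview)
Your proposal is correct and follows precisely the approach the paper indicates: the paper does not give a detailed proof of this proposition but simply states that ``it is then easy to check, by induction'' that the result holds, citing \cite{Coste99} for details. Your structural induction on the formation rules, using the Boolean-algebra axioms for the connectives and the projection axiom (ii) for the existential quantifier (with the universal case reduced via negation), is exactly the intended argument, and your care with cylindrification and coordinate bookkeeping fills in the routine details the paper omits.
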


\begin{remark}
A rigorous treatment of these aspects of o-minimality can be found in \cite{Mark02}.
\end{remark}

An easy consequence of the above proposition that we shall use repeatedly and in various form is the following.

\begin{proposition}\label{defder}
Let $\Omega$ be a definable open subset of $\R^n$ and $g:\Omega\rightarrow\R^m$ a definable differentiable mapping. Then its derivative $g'$ is definable.
\end{proposition}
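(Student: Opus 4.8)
The plan is to express the graph of $g'$ by a first order definable formula and invoke Proposition~\ref{p:FO1}. First I would recall that for $x\in\Omega$ the derivative $g'(x)$ is the (unique, when it exists) linear map $L\in\mathcal{L}(\R^n,\R^m)\cong\R^{m\times n}$ such that
\[
\lim_{h\to 0}\frac{\|g(x+h)-g(x)-Lh\|}{\|h\|}=0 .
\]
Identifying $L$ with its matrix of entries $(L_{ij})$, this limit condition can be rewritten with the usual $\epsilon$--$\delta$ quantifiers: $L=g'(x)$ if and only if for every $\epsilon>0$ there is $\delta>0$ such that for all $h\in\R^n$ with $0<\|h\|<\delta$ and $x+h\in\Omega$ one has $\|g(x+h)-g(x)-Lh\|\le \epsilon\|h\|$. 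Since $g$ is differentiable on all of $\Omega$ this $L$ exists for every $x\in\Omega$ and is unique, so the set
\[
\graph g'=\bigl\{(x,L)\in\Omega\times\R^{m\times n}\colon L=g'(x)\bigr\}
\]
is exactly the set of pairs $(x,L)$ satisfying the displayed quantified condition.

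Next I would observe that every ingredient of that condition is a first order definable formula in the sense defined just above: the norms $\|\cdot\|$ (say the Euclidean ones) are given by polynomial equations and hence definable; the map $(x,h,L)\mapsto g(x+h)-g(x)-Lh$ is definable because $g$ is definable, translations and the bilinear evaluation $(L,h)\mapsto Lh$ are polynomial (apply Proposition~\ref{p:image} for composition with the definable maps $x\mapsto x+h$ and for images/preimages), so that the relation ``$\|g(x+h)-g(x)-Lh\|\le\epsilon\|h\|$'' is of the form ``$\phi(x,h,L,\epsilon)\le 0$'' for a definable $\phi$, hence a first order formula; membership $x\in\Omega$ and $x+h\in\Omega$ are first order formulas since $\Omega$ is definable; and the quantifiers $\forall\epsilon>0$, $\exists\delta>0$, $\forall h$ are exactly the quantifier-building rules allowed. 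Combining these with the Boolean connectives produces a first order definable formula $\Phi(x,L)$ whose solution set is $\graph g'$, and Proposition~\ref{p:FO1} then gives that $\graph g'$ is definable, i.e.\ $g'$ is definable.

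I do not expect a genuine obstacle here; the only point requiring a little care is the legitimacy of writing the Fréchet-derivative limit as a bounded $\epsilon$--$\delta$ statement, which is fine precisely because differentiability is assumed \emph{everywhere} on $\Omega$, guaranteeing existence and uniqueness of the limiting linear map at each point (so no hidden ``$\liminf$'' versus ``$\lim$'' subtlety arises, and one does not need any further regularity of $g$). An alternative, essentially equivalent, route would be to write the partial derivatives directly: for each pair $(i,j)$, $\partial g_i/\partial x_j(x)=r$ iff $\forall\epsilon>0\,\exists\delta>0\,\forall t$ with $0<|t|<\delta$ and $x+te_j\in\Omega$, $|g_i(x+te_j)-g_i(x)-rt|\le\epsilon|t|$, which is again a first order formula; assembling the $m\times n$ such formulas yields the graph of $g'$. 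Either way the conclusion is immediate from Propositions~\ref{p:image} and~\ref{p:FO1}.
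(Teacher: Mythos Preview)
Your proposal is correct and follows exactly the approach the paper intends: the paper does not give a detailed proof but presents the proposition as ``an easy consequence of the above proposition'' (Proposition~\ref{p:FO1}), and your $\epsilon$--$\delta$ first order formula for $\graph g'$ is precisely the way to make this explicit.
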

\smallskip

There exists many regularity results for definable sets \cite{Dries-Miller96}. In this paper, we  essentially use the following fundamental lemma.

\medskip

\noindent Let ${\cal O}$ be an o-minimal structure on $(\R,+,.)$.

\begin{theoremofothers}[Monotonicity Lemma {\cite[Theorem\,4.1]{Dries-Miller96}}]
Let $f : I\subset \R
\rightarrow \R$ be a definable function and $k\in \N$. Then there
exists a finite partition of $I$ into $l$ disjoint intervals
$I_1,\ldots,I_l$ such that $f$ restricted to each nontrivial
interval $I_j$, $j\in\{1,\ldots,l\}$ is $C^k$ and either strictly
monotone or constant.
\end{theoremofothers}



\medskip

\noindent
We end this section by giving examples of o-minimal structures (see \cite{Dries-Miller96} and references therein).

\smallskip

\noindent
{\bf Examples} (a) {\bf (globally subanalytic sets)}
There exists an o-minimal
structure, that contains all sets of
the form $\{(x,t)\in [-1,1]^p\times\R: f(x)=t\}$ where
$f:[-1,1]^p\rt \R$ ($p\in \N$) is an analytic function that can be
extended analytically on a neighborhood of the box~$[-1,1]^p$.
The sets belonging to this structure are called {\em globally
subanalytic sets}; see \cite{Dries-Miller96} and also \cite{Bie88} for an account on subanalytic geometry.

 For instance the functions
$$\sin : [-a,a]\rightarrow  \R $$ (where $a$ ranges over $\R_+$) are globally subanalytic, while $\sin :\R\rightarrow\R$
 is not (else the set $\sin^{-1}(\{0\})$ would be finite by Proposition~\ref{p:image}(ii) and Definition~\ref{d:omin}(iv)).

\noindent (b) {\bf (log-exp structure)} There exists an
o-minimal structure containing the globally subanalytic sets and the graph of
$\exp\,: \R\rt \R$.

\medskip

We shall also use a more ``quantitative" characteristic of o-minimal structures.

\begin{definition}[Polynomially bounded structures]\label{d:polbon}
An o-minimal structure is called polynomially bounded if for all function $\psi:(a,+\infty)\rightarrow \R$ there exists a positive constant $C$ and an integer $N$ such that $|\psi(t)|\leq Ct^N$ for all $t$ sufficiently large
\end{definition}

The classes of semi-algebraic sets or of globally subanalytic sets are polynomially bounded \cite{Dries-Miller96}, while the log-exp structure is obviously not.

We have the following result in the spirit of the classical Puiseux development of semi-algebraic mappings, which will be used in the proof of Theorem~\ref{conv} below.

\begin{corollary}[\cite{Dries-Miller96}]\label{c:polbon} If $\epsilon>0$ and $\phi:(0,\epsilon)\rightarrow \R $ is definable in a polynomially bounded o-minimal structure there exist $c\in \R$ and $\alpha\in \R$ such that
$$\phi(t)=ct^{\alpha}+o(t^{\alpha}),\quad t\in (0,\epsilon).$$
\end{corollary}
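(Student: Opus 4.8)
The plan is to reduce the asymptotic statement to the Monotonicity Lemma and the polynomial boundedness axiom. First I would dispose of the trivial cases: if $\phi$ is eventually zero near $0$, take $c=0$ and any $\alpha$. Otherwise, applying the Monotonicity Lemma to $\phi$ on $(0,\epsilon)$ with $k=1$, we may shrink $\epsilon$ so that $\phi$ is $C^1$ and either constant (done, with $\alpha=0$) or strictly monotone on $(0,\epsilon)$, and moreover of constant sign there; replacing $\phi$ by $-\phi$ if needed we assume $\phi>0$ on $(0,\epsilon)$. The key device is then to pass to the variable $t=1/s$: define $\psi(s)=\phi(1/s)$ for $s>1/\epsilon$, which is definable, positive, and monotone at $+\infty$. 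The polynomial boundedness hypothesis applied to $\psi$ gives constants $C,N$ with $\psi(s)\leq Cs^N$, hence $\phi(t)\leq C t^{-N}$ for small $t$; applying the same reasoning to the definable function $1/\phi$ (equivalently, to $1/\psi$) yields a lower bound $\phi(t)\geq c' t^{M}$ for some $M$ and $c'>0$. Thus $\log\phi(t)/\log t$ stays bounded as $t\to 0^+$.

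Next I would extract the exponent $\alpha$. Consider the definable function $t\mapsto \log\phi(t)/\log t$ on a small interval $(0,\epsilon')$; by the Monotonicity Lemma it has a limit $\alpha\in\R$ as $t\to 0^+$ (it is eventually monotone, and we have just shown it is bounded). The claim is that with this $\alpha$ one has $\phi(t)=ct^\alpha + o(t^\alpha)$ for a suitable $c$. To get this, set $g(t)=\phi(t)t^{-\alpha}$, which is definable and positive on a small punctured interval; it suffices to show $g$ has a finite positive limit $c$ as $t\to 0^+$, and then to upgrade "$\phi(t)\sim ct^\alpha$" to "$\phi(t)=ct^\alpha+o(t^\alpha)$", which is immediate once the limit exists since $\phi(t)-ct^\alpha = (g(t)-c)t^\alpha = o(t^\alpha)$. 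By the Monotonicity Lemma $g$ has a limit $c\in[-\infty,+\infty]$; monotonicity and positivity force $c\in[0,+\infty]$. One then rules out $c=0$ and $c=+\infty$: if $c=0$ then for every $\beta>0$ one would have $\phi(t)\leq t^{\alpha+\beta}$ near $0$, forcing $\log\phi(t)/\log t \geq \alpha+\beta$, contradicting that this ratio tends to $\alpha$; the case $c=+\infty$ is symmetric, using a negative $\beta$. Hence $c\in(0,+\infty)$, and we are done (the case where the original $\phi$ was replaced by $-\phi$ just changes the sign of the constant).

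The main obstacle is the clean extraction of $\alpha$ as the limit of $\log\phi(t)/\log t$ and then showing that the normalized function $g(t)=\phi(t)t^{-\alpha}$ converges to a \emph{strictly positive, finite} constant rather than to $0$ or $\infty$; everything else is bookkeeping with the Monotonicity Lemma and two applications of the polynomial-boundedness axiom (one to $\psi(s)=\phi(1/s)$, one to $1/\psi$). It is worth noting that the honest way to see the boundedness of $\log\phi(t)/\log t$ is precisely the two-sided polynomial bound obtained above, so the polynomial boundedness hypothesis enters in an essential way — without it (e.g.\ in the log-exp structure) a function like $\phi(t)=1/|\log t|$ is definable but admits no expansion $ct^\alpha+o(t^\alpha)$, which is consistent with the statement.
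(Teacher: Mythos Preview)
The paper does not actually prove this corollary; it is quoted without proof from \cite{Dries-Miller96}, so there is nothing in the paper to compare your argument against. Assessing your sketch on its own merits, there are two genuine gaps. First, the auxiliary functions $t\mapsto \log\phi(t)/\log t$ and $g(t)=\phi(t)t^{-\alpha}$ are not in general definable in the given structure: a polynomially bounded structure cannot define $\exp$ (hence not $\log$), and the power $t^{\alpha}$ is definable only when $\alpha$ lies in the structure's field of exponents, which you have not established. Your two appeals to the Monotonicity Lemma for these functions are therefore unjustified, and you cannot simply pass to a larger o-minimal expansion containing $\log$ without further argument.

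Second, the step ruling out $c=0$ fails as written. From $g(t)\to 0$ you claim $\phi(t)\leq t^{\alpha+\beta}$ near $0$ for every $\beta>0$, but $g\to 0$ only gives $\log g(t)\to -\infty$, which is compatible with $\log g(t)=o(|\log t|)$ and hence with $g(t)\gg t^{\beta}$ for every $\beta>0$; your own closing example $\phi(t)=1/|\log t|$ behaves exactly this way (there $\alpha=\lim \log\phi/\log t=0$ and $g=\phi\to 0$, yet $\log\phi/\log t\to \alpha$ produces no contradiction). The missing ingredient is that in a polynomially bounded structure every definable function tending to $+\infty$ grows at least like a positive power --- which one gets by applying the axiom to its compositional inverse --- but invoking this for $1/g$ again requires $g$ to be definable, looping back to the first gap. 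The treatment in \cite{Dries-Miller96}, building on Miller's analysis, resolves both issues simultaneously via the Hardy field of definable germs and its valuation, which in particular forces the critical exponent $\alpha$ into the field of exponents so that $t^{\alpha}$ is itself definable.
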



\section{Stochastic games}
\subsection{Definitions and fundamental properties}\label{Defsto}

\noindent
{\bf Stochastic games: definition.} A stochastic game is determined by

\begin{itemize}
\item Three sets: a finite set of {\em states} $\Omega$, with cardinality $d$, and two nonempty  sets of {\em actions} $X\subset \R^{p}$ and $Y\subset \R^{q}$.
\item A {\em payoff function}
$g:\Omega\times X\times Y\rightarrow \R$ and a {\em transition} probability $\rho : \Omega\times X\times Y\rightarrow \Delta(\Omega)$, where $\Delta(\Omega)$ is the set of probabilities over $\Omega$.
\end{itemize}
Such a game is denoted by $(\Omega,X,Y,g,\rho)$. Unless explicitly specified, we will always assume the following, which guarantees that
the finite horizon and discounted values
do
exist.

\begin{center}
\fbox{\begin{minipage}{0.7\textwidth}
Standing assumptions ({$\cal A$}): The reward function $g$ and the transition function $\rho$ are {\em continuous}; both action sets $X,Y$ are
  {\em nonempty compact sets}.\end{minipage}}
\end{center}

\medskip

\noindent
{\bf Strategies and values.} The game is played as follows. At time $n=1$, the state $\omega_1$ is known by both players, player 1 (resp.\ 2) makes a move $x$ in $X$ (resp.\ $y$ in $Y$), the resulting payoff is $g_1:=g(x_1,y_1,\omega_1)$ and the couple $(x_1,y_1)$ is observed by the two players.  The new state $\omega_2$ is drawn according to the probability distribution $\rho(\cdot|x_1,y_1,\omega_1)$, both players observe this new state and can thus play accordingly. This process goes on indefinitely and generates a stream of actions $x_i,y_i$, states $\omega_i$ and payoffs $g_i=g(x_i,y_i,\omega_i)$.  Denote by $H_n=(\Omega\times X\times Y)^n\times \Omega$ the sets of stories of length\footnote{This is the set of histories at the end of the n-th stage, with the convention that $n=0$ before the first stage.} n, $H=\cup_{n\in\N}H_n$ the set of all finite stories and $H_{\infty}=(\Omega\times X\times Y)^{\N}$ the set of infinite stories. A strategy for player 1 (resp.\ player 2) is a mapping $$\sigma:H\rightarrow \Delta(X)\quad(\mbox{resp.\ }\tau:H\rightarrow \Delta(Y)).$$
A triple $(\sigma,\tau,\omega_1)$ defines a probability measure on $H_{\infty}$ whose expectation is denoted $\Esp$.
The stream of payoffs corresponding to the triple $(\sigma,\tau,\omega_1)$ can be evaluated, at time $n$, as
\begin{equation}
\gamma_n(\sigma,\tau,\omega_1)=\frac{1}{n}\left(\Esp\left(\sum_{i=1}^n g_i\right) \right).
\end{equation}
The corresponding game is denoted by $\Gamma_n$;
Assumption ({$\cal A$}) allows us
to apply Sion's Theorem~
\cite[Theorem A.7, p. 156]{sorin02},
which shows that this game has a value $v_n(\omega_1)$ or simply $(v_n)_1$.
When the sequence $v_n=((v_n)_1,\ldots,(v_n)_d)$ converges as $n$ tends to infinity the stochastic game is said to  have an asymptotic value.

Another possibility for evaluating the stream of payoffs is to rely on a discount factor $\lambda\in]0,1]$ and to consider the game $\Gamma_{\lambda}$ with payoff
\begin{equation}
\gamma_\lambda(\sigma,\tau,\omega_1)=\Esp\left(\lambda\sum_{i=1}^{+\infty}(1-\lambda)^{i-1}g_i\right).
\end{equation}
Applying once more Sion result this game has a value which we denote by $v_{\lambda}(\omega_1)$ or simply $(v_{\lambda})_1$. The vector $v_{\lambda}$ is defined as $v_{\lambda}=((v_{\lambda})_1,\ldots,(v_{\lambda})_d)$.
One of the central question of this paper is to find sufficient conditions to have
$$\lim_{n\rightarrow +\infty} v_n=\lim_{\lambda\rightarrow 0,\,\lambda>0}v_{\lambda}.$$


\noindent
{\bf Shapley operator and Shapley's theorem.} Let us now describe the fundamental result of Shapley which provides an interpretation of the value of the games $\Gamma_n$ as rescaled iterates of  a nonexpansive mapping. In the same spirit, the discounted values $v_{\lambda}$ appear as fixed points of a family of contractions.

  Let $(\Omega, X,Y, g,\rho)$ be an arbitrary stochastic game. The Shapley operator associated to such a game is a mapping $\Psi:\R^d\rightarrow \R^d$, whose $k$th component is defined through
\begin{equation}\label{op_shapley}
\Psi_k(f_1,\ldots,f_d)=\max_{\mu \in \Delta(X)}\min_{\nu \in \Delta(Y)} \int_X\int_Y\left[  g(x,y,\omega_k)+ \sum_{i=1}^d \rho(\omega_i|x,y,\omega_k)f_i\right]d\mu(x) \,d\nu(y).
\end{equation}
Observe as before, that the maximum and the minimum can be interchanged in the above formula. The space $\R^d$ can be
thought of as the set of {\em value functions ${\mathcal F}(\{1,\ldots,d\};\R)$}, i.e.\ the functions which map  $\{1,\ldots,d\}\simeq \Omega$ (set of states) to $\R$ (real-space of values). It is known that a self-map $\Psi$ of $\mathbb{R}^d$ can be represented
as the Shapley operator of some stochastic game
--- that does not satisfy necessarily assumption ({$\cal A$}) --
if and only if it preserves the standard
partial order of $\mathbb{R}^d$ and commutes with the addition
of a constant~\cite{kolokoltsov}.
Moreover, the transition probabilities
can be even required to be degenerate (deterministic), see~\cite{singer00,mxpnxp0}.

\smallskip

\begin{theorem}[Shapley, \cite{Shap53}]$\;$\\
(i) For every positive integer $n$, the value $v_n$ of the game $\Gamma_n$ satisfies $v_n=\frac{1}{n}\Psi^n(0)$.\\
(ii) The value $v_{\lambda}$ of the discounted game $\Gamma_{\lambda}$ is characterized by the following fixed point condition
\begin{equation}\label{fixed_point}
v_{\lambda}=\lambda\Psi(\frac{1-\lambda}{\lambda}\,v_{\lambda}).
\end{equation}
\end{theorem}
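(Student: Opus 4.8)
The plan is to derive both parts from the recursive, dynamic-programming structure of the games, the decisive point being that in any sub-game the value depends on the past only through the current state. I would first note that under assumption $(\mathcal A)$ the formula \eqref{op_shapley} indeed defines a map $\Psi\colon\R^d\to\R^d$: for fixed $f\in\R^d$ the integrand is bounded and affine in $\mu$ and in $\nu$ separately, the simplices $\Delta(X),\Delta(Y)$ are convex and compact (for the weak-$*$ topology), and $(\mu,\nu)\mapsto\int_X\int_Y[\,\cdots]\,d\mu\,d\nu$ is continuous, so Sion's theorem applies; this gives at once the interchange of $\max$ and $\min$ announced after \eqref{op_shapley}, the existence of the one-shot value $\Psi_k(f)$, and optimal mixed actions for both players. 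Since $\sum_{i=1}^d\rho(\omega_i|x,y,\omega_k)=1$, a direct estimate shows $\Psi$ is nonexpansive for the sup-norm.

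For (i), put $u_0=0$ and $u_m=\Psi^m(0)$, and show that $u_n(\omega_1)$ is the value of the $n$-stage game with total (un-normalised) payoff $\sum_{i=1}^n g_i$; dividing by $n$ then yields $v_n=\frac1n\Psi^n(0)$. I would prove this by backward induction on the number $m$ of remaining stages. Let player $1$ follow the \emph{Markov} strategy which, whenever $m$ stages remain and the current state is $\omega$, plays an optimal max-action $\mu_{\omega,m}$ in $\Psi_\omega(u_{m-1})$. Conditioning a history with $m$ stages to go on the actions played at that stage, and using that by the induction hypothesis the conditional expected payoff of the remaining $m-1$ stages started from any state $\omega'$ is at least $u_{m-1}(\omega')$, one obtains that the conditional expected payoff of the last $m$ stages is at least $\int_X\int_Y[g(x,y,\omega)+\sum_{i=1}^d\rho(\omega_i|x,y,\omega)u_{m-1}(\omega_i)]\,d\mu_{\omega,m}(x)\,d\nu(y)\ge\Psi_\omega(u_{m-1})=u_m(\omega)$, for every action $\nu$ of player $2$. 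Thus player $1$ guarantees $u_n(\omega_1)$; the symmetric argument with optimal min-actions shows player $2$ guarantees at most $u_n(\omega_1)$, so this is the value and $v_n=\frac1n\Psi^n(0)$.

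For (ii), consider $\Phi_\lambda\colon\R^d\to\R^d$ defined by $\Phi_\lambda(w)=\lambda\Psi\!\big(\tfrac{1-\lambda}{\lambda}w\big)$. Nonexpansiveness of $\Psi$ makes $\Phi_\lambda$ a $(1-\lambda)$-Lipschitz contraction for the sup-norm, so it has a unique fixed point $w_\lambda$, i.e.\ the unique solution of \eqref{fixed_point}. To identify $w_\lambda$ with $v_\lambda$, let player $1$ use the stationary strategy which at every stage with current state $\omega$ plays an optimal max-action in $\Psi_\omega\!\big(\tfrac{1-\lambda}{\lambda}w_\lambda\big)$. Writing $M_k=\lambda\sum_{i=1}^{k}(1-\lambda)^{i-1}g_i+(1-\lambda)^k w_\lambda(\omega_{k+1})$, the fixed point identity says precisely that, against any strategy $\tau$ of player $2$, $(M_k)_k$ is a submartingale with $\mathbb{E}M_0=w_\lambda(\omega_1)$; as $g$ and $w_\lambda$ are bounded, the tail $(1-\lambda)^k w_\lambda(\omega_{k+1})$ tends to $0$ and $\mathbb{E}M_k\to\gamma_\lambda(\sigma,\tau,\omega_1)$, whence $\gamma_\lambda(\sigma,\tau,\omega_1)\ge w_\lambda(\omega_1)$. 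The symmetric argument (player $2$ playing optimal min-actions, turning $(M_k)_k$ into a supermartingale) gives the reverse inequality, so $v_\lambda=w_\lambda$ and \eqref{fixed_point} holds.

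The estimates above are routine; the part requiring care — and the one I would write out in full — is the reduction to Markov and stationary strategies, i.e.\ the fact that in each sub-game the value depends on the history only through the current state, together with the measurable selection of optimal mixed actions. Both rest on assumption $(\mathcal A)$ through Sion's theorem and the compactness of $\Delta(X)$ and $\Delta(Y)$.
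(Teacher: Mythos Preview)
Your outline is correct and follows the standard dynamic-programming derivation of Shapley's theorem: backward induction on the number of remaining stages for~(i), and the Banach contraction for existence together with the (sub/super)martingale verification for~(ii). Note, however, that the paper does not supply its own proof of this statement: it is quoted as Shapley's classical result from~\cite{Shap53} and used as a black box, so there is no in-paper argument to compare yours against. Your approach is precisely the textbook one (as in~\cite{sorin02,MSZ}), and the points you flag as needing care --- the reduction to Markov/stationary strategies and the selection of optimal mixed actions in each one-shot game --- are indeed where the work lies; under assumption~$(\mathcal A)$ they go through via Sion's theorem and compactness exactly as you indicate.
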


\smallskip

\noindent
{\bf Uniform value.} A stochastic game is said to have a uniform value $v_{\infty}$ if both players can almost guarantee  $v_{\infty}$ provided that the length of the $n$-stage game is large enough. Formally, $v_{\infty}$ is the uniform value of the game if for any $\epsilon>0$, there is a couple of strategies of each player $(\sigma, \tau)$ and a time $N$ such that, for every $n\geq N$, every starting state $\omega_1$ and every strategies $\sigma'$ and $\tau'$,
\begin{eqnarray*}
\gamma_n(\sigma,\tau',\omega_1)&\geq &v_{\infty}(\omega_1)-\epsilon\\
\gamma_n(\sigma',\tau,\omega_1)&\leq &v_{\infty}(\omega_1)+\epsilon
\end{eqnarray*}

It is straightforward to establish that if a game has a uniform value $v_\infty$, then $v_n$ and $v_\lambda$ converges to $v_\infty$. The converse is not true however, as there are games with no uniform value but for which $v_n$ and $v_\lambda$ converge~\cite{MeZa71}.

 \smallskip
\noindent
{\bf Some subclasses of stochastic games.}\begin{itemize}
\item {\em Markov Decision Processes} : they correspond to one-player stochastic games (the choice of Player 2 has no influence on payoff nor transition). In this case the Shapley operator has the particular form

\begin{equation}
\Psi_k(f_1,\ldots,f_d)=\max_{x \in X} \left[g(x,\omega_k)+ \sum_{i=1}^d \rho(\omega_i|x,\omega_k)f_i\right]
\end{equation}
\noindent for every $k=1,\ldots,d$.

\item  {\em Games with perfect information }:  each state is entirely controlled by one of the player (i.e.\ the action of the other player has no influence on the payoff in this state nor on transitions from this state). In that case, the Shapley operator has a specific form : for any state $\omega_k$ controlled by Player 1,
\begin{equation}
\Psi_k(f_1,\ldots,f_d)=\max_{x \in X} \left[g(x,\omega_k)+ \sum_{i=1}^d \rho(\omega_i|x,\omega_k)f_i\right],
\end{equation}
\noindent and for any state $\omega_k$ controlled by Player 2,
\begin{equation}
\Psi_k(f_1,\ldots,f_d)=\min_{y \in Y} \left[g(y,\omega_k)+ \sum_{i=1}^d \rho(\omega_i|y,\omega_k)f_i\right].
\end{equation}

\item  {\em Games with switching control }:  in each state the transition is entirely controlled by one of the player (i.e.\ the action of the other player has no influence on transitions from this state, but it may alter the payoff). In that case, the Shapley operator has a specific form: for any state $\omega_k$ where the transition is controlled by Player 1,
\begin{equation}
\Psi_k(f_1,\ldots,f_d)=\max_{\mu \in \Delta(X)}\int_X\left[  \min_{y \in Y} g(x,y,\omega_k)+ \sum_{i=1}^d \rho(\omega_i|x,\omega_k)f_i\right] d\mu(x),
\end{equation}
\noindent and for any state $\omega_k$ where the transition is controlled by Player 2,
\begin{equation}
\Psi_k(f_1,\ldots,f_d)=\min_{\nu \in \Delta(Y)}\int_X\left[\max_{x \in X} g(x,y,\omega_k)+\sum_{i=1}^d \rho(\omega_i|y,\omega_k)f_i \right] d\nu(y).
\end{equation}
\end{itemize}

\medskip
\noindent

\begin{remark}\label{remarqueperfectinfo} Recall that we made assumption ({$\cal A$}) in order to prove the existence of $v_\lambda$ and $v_n$. For Markov decision processes and games with perfect information this existence is automatic whenever the payoff is bounded, hence  there is no need to assume continuity of $g$ or~$\rho$.
\end{remark}

\smallskip
\noindent
{\bf Definable stochastic games.}   Let $\cal O$ be an o-minimal structure. A stochastic game is called {\em definable} if both the payoff function and the probability transition are definable functions.\\

Observe in the above definition that the definability of $g$ implies that the action sets are also definable. Note also that the space $\Delta(\Omega)$, is naturally identified to the $d$ simplex and is thus a semi-algebraic set. Hence there is no possible ambiguity when we assume that transition functions are definable.

\medskip

The questions we shall address in the sequel revolve around the following two ones

\begin{enumerate}
\item[(a)] Under which conditions the Shapley operator of a definable game is definable in the {\em same} o-minimal structure?
\item[(b)] If a Shapley operator of a game is definable, what are the consequences in terms of games values?
\end{enumerate}

In the next subsection we answer the  second question in a satisfactory way: if a Shapley operator is definable, then $v_n$ and $v_\lambda$ converge, to the same limit. The first question is more complex and will be partially answered in Section~\ref{sectionvaluesto}

\subsection{Games with definable Shapley operator have a uniform value}

Let $\cal O$ be an o-minimal structure and $d$ be a positive integer.  We recall the following definition: a subset $\K\subset\R^d$ is  called a {\em cone} if it satisfies $\R_+\K\subset \K$.\\
Let  $\|\cdot \|$ be a norm on  $\R^d$. A mapping $\Psi :A\subset\R^d\rightarrow\R^d$ is called {\em nonexpansive} if
$$\|\Psi(f)-\Psi(g) \|\leq \|f-g \|,$$ whenever $f,g$ are in $\R^d$. Let us recall that the Shapley operator of a stochastic game is nonexpansive with respect to the supremum norm (see \cite{sorin02}), norm which is defined as usual by $\|f\|_{\infty}=\max\{ f_i:i=1,\ldots,d\}$.

The following abstract result is  strongly motivated by the operator approach to stochastic games, i.e.\ the approach in terms of Shapley operator (see Sorin \cite{Sorin03}). It grounds on the work of Bewley-Kohlberg \cite{BewlKohl76} and on its refinement by Neyman \cite[Th.~4]{Ney03}, who showed that the convergence of the iterate $\Psi^n(0)/n$ as $n\to \infty$ is guaranteed if the map $\lambda \to v_\lambda$ has bounded variation, and deduced part $(i)$ of the following theorem in the specific case of a semi-algebraic operator \cite[Th.~5]{Ney03}.

\begin{theorem}[Nonexpansive definable  mappings]\label{conv} The vector space  $\R^d$ is endowed with an arbitrary norm $\|\cdot \|$. Let $\K$ be a nonempty definable closed cone of $\R^d$ and $\Psi:\K\rightarrow\K$  a definable
nonexpansive mapping. Then
\begin{itemize}
\item[(i)] There exists $v$ in $\K$, such that for all $f$ in $\K$, the sequence $\frac{1}{n}\Psi^n(f)$ converges to $v$ as $n$ goes to infinity.
\item[(ii)] When in addition $\Psi$ is definable in a polynomially bounded structure there exists $\theta \in ]0,1[$ and $c>0$ such that
$$\| \frac{\Psi^n(f)}{n}-v\| \leq \frac{c}{n^\theta}+\frac{\|f\|}{n},$$
for all $f$ in $\K$.
\end{itemize}

\end{theorem}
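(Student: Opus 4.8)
The plan is to reduce the statement to the one-dimensional setting where the Monotonicity Lemma and Corollary~\ref{c:polbon} apply, via the family of discounted fixed points. First I would recall from Neyman's refinement of Bewley--Kohlberg (\cite[Th.~4]{Ney03}) that for a nonexpansive self-map $\Psi$ of a closed cone, the quantity $\frac{1}{n}\Psi^n(f)$ converges to a limit $v$ independent of $f$ as soon as the map $\lambda \mapsto v_\lambda$, where $v_\lambda$ is the unique fixed point of $f \mapsto \lambda \Psi(\tfrac{1-\lambda}{\lambda} f)$ (existence and uniqueness following from the contraction property, which is a standard consequence of nonexpansiveness on a cone together with commutation with additive constants — or, more simply here, by a direct Banach fixed point argument applied to the $\lambda$-rescaled operator), has bounded variation on $(0,1]$, i.e. $\int_0^1 \|\tfrac{d}{d\lambda} v_\lambda\| \, d\lambda < \infty$. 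So the first real step is to show that $\lambda \mapsto v_\lambda$ is definable in $\mathcal O$: the graph $\{(\lambda, v) : v = \lambda \Psi(\tfrac{1-\lambda}{\lambda} v)\}$ is cut out by a first-order formula in the definable data $\Psi$, hence definable by Proposition~\ref{p:FO1}; that it is the graph of a function (single-valued) is the uniqueness just mentioned.

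Once definability of $\lambda \mapsto v_\lambda$ is established, each coordinate $\lambda \mapsto (v_\lambda)_k$ is a definable function of one variable, so by the Monotonicity Lemma it is piecewise $C^1$ and piecewise monotone on $(0,1]$; monotonicity on each of the finitely many pieces immediately gives finite total variation, hence bounded variation, which by Neyman's theorem yields part (i), with $v := \lim_{\lambda \to 0} v_\lambda$. For part (ii), under the polynomial boundedness hypothesis I would apply Corollary~\ref{c:polbon} to each coordinate: near $0$ one gets $(v_\lambda)_k = c_k \lambda^{\alpha_k} + o(\lambda^{\alpha_k})$. Nonexpansiveness forces $v_\lambda$ to stay bounded as $\lambda \to 0$ (e.g. $\|v_\lambda - \lambda \Psi(0)\| \le (1-\lambda)\|v_\lambda\|$ type estimates, or comparison with a fixed point bound), so $\alpha_k \ge 0$, and if $\alpha_k = 0$ the term is constant so harmless; thus there is a uniform exponent $\theta \in (0,1)$ (take $\theta$ a bit smaller than $\min\{\alpha_k : \alpha_k > 0\}$, capped below $1$) and a constant with $\|v_\lambda - v\| \le c\, \lambda^{\theta}$ for $\lambda$ small, and by compactness/continuity on the remaining compact subinterval of $(0,1]$ this can be arranged on all of $(0,1]$. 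The passage from this modulus of convergence of $v_\lambda$ to the rate $\|\tfrac{1}{n}\Psi^n(f) - v\| \le \tfrac{c}{n^\theta} + \tfrac{\|f\|}{n}$ is then the quantitative version of the Bewley--Kohlberg--Neyman argument, in the spirit of Milman \cite{Milman02}: one uses the algebraic identity relating $\Psi^n(f)/n$ to a weighted average of the $v_\lambda$'s (discretizing the discount parameter, e.g. $\lambda_i = 1/i$) together with the nonexpansiveness of $\Psi$ to telescope the error, the $\tfrac{\|f\|}{n}$ term absorbing the dependence on the starting point via $\|\tfrac{1}{n}\Psi^n(f) - \tfrac{1}{n}\Psi^n(0)\| \le \tfrac{\|f\|}{n}$.

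The main obstacle I expect is part (ii): getting the \emph{uniform} exponent $\theta$ and carrying the $\lambda$-modulus over to an $n$-modulus cleanly. The definability/monotonicity input is robust, but the quantitative telescoping requires care in choosing the discretization of $\lambda$ and in controlling the accumulated nonexpansive errors so that the exponent $\theta < 1$ survives; one must also check that the finitely many "bad" breakpoints of the Monotonicity Lemma and the $o(\lambda^{\alpha_k})$ remainders do not spoil the estimate, which is where keeping $\theta$ strictly below the true Puiseux exponents (and below $1$) is used. The case distinction $\alpha_k = 0$ versus $\alpha_k > 0$ — i.e. coordinates where $v_\lambda$ does not actually move to leading order — should be handled explicitly to avoid a degenerate bound.
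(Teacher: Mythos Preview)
Your plan for part~(i) is essentially the paper's proof: definability of $\lambda\mapsto v_\lambda$ via Proposition~\ref{p:FO1}, the Monotonicity Lemma coordinatewise to get bounded variation, then Neyman's criterion. (The paper also reproduces the telescoping estimate $d_n:=\|nv_{1/n}-\Psi^n(0)\|\leq d_{n-1}+(n-1)\|v_{1/n}-v_{1/(n-1)}\|$ and an Abel summation, which it then reuses for~(ii).)

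For part~(ii), however, your plan diverges from the paper and has a real gap. The paper does \emph{not} apply Corollary~\ref{c:polbon} to the coordinates of $v_\lambda$; it applies it to the definable one-variable function $\lambda\mapsto \|\tfrac{d}{d\lambda}v_\lambda\|$ (definability of the derivative is Proposition~\ref{defder}), obtaining $\|\tfrac{d}{d\lambda}v_\lambda\|\le c_2\lambda^{-\gamma}$ with $\gamma<1$ forced by integrability. This is the right object because the telescoping inequality for $d_n$ involves the \emph{increments} $\|v_{1/i}-v_{1/(i+1)}\|$, which are bounded directly by $\int_{1/(i+1)}^{1/i}\|\tfrac{d}{d\lambda}v_\lambda\|\,d\lambda$; summing gives $d_n/n=O(n^{-(1-\gamma)})$ and one sets $\theta=1-\gamma$.

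Your route---expand $(v_\lambda)_k$ and deduce $\|v_\lambda-v\|\le c\lambda^\theta$---has two problems. First, the expansion should be of $(v_\lambda)_k-v_k$, not of $(v_\lambda)_k$: when $\alpha_k=0$ the leading term is the constant $v_k$, and the remainder $o(1)$ is precisely what you must estimate, so ``$\alpha_k=0$ is harmless'' is false as stated. Second, and more seriously, a modulus $\|v_\lambda-v\|\le c\lambda^\theta$ by itself does \emph{not} feed into the telescoping: the triangle inequality gives only $\|v_{1/i}-v_{1/(i+1)}\|\le 2c\,i^{-\theta}$, whence $d_n\le d_1+\sum_{i\le n} i\cdot 2c\,i^{-\theta}\sim n^{2-\theta}$, and $d_n/n$ diverges for $\theta<1$. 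To rescue your approach you must re-invoke coordinatewise monotonicity near $0$ so that the tail $D_n=\sum_{i\ge n}\|v_{1/(i+1)}-v_{1/i}\|$ telescopes (in the $\ell^1$ norm, then by norm equivalence) to $\|v_{1/n}-v\|_1=O(n^{-\theta})$, and only then Abel-sum. This works, but it is exactly the step you wave away with ``in the spirit of Milman''; the paper's choice of applying Corollary~\ref{c:polbon} to the derivative avoids the detour entirely.
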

\begin{proof}{Proof.}  For any $\lambda\in(0,1]$, we can apply Banach fixed point theorem to define $V_\lambda$ as the unique fixed point
of the map $\Psi((1-\lambda)\,\cdot)$ and set $v_\lambda=\lambda
V_\lambda$ (recall that $\K$ is a cone). The graph of $V_\lambda$ is given by $\{(\lambda,\,f)\in (0,1]\times\K:\Psi((1-\lambda)f)-f=0\}$. Using Proposition~\ref{p:FO1}, we obtain that $\lambda\rightarrow V_{\lambda}$ and
 $\lambda\rightarrow v_{\lambda}$ are definable in $\mathcal O$. Observe also that
\begin{eqnarray*}
 \|V_\lambda\|&=&\|\Psi((1-\lambda)V_\lambda)\|\\
 &\leq&\|\Psi((1-\lambda)V_\lambda)-\Psi(0)\|+\|\Psi(0)\|\\
 &\leq&\|(1-\lambda)V_\lambda\|+\|\Psi(0)\|
 \end{eqnarray*}
 \noindent so that the curve $\lambda\rightarrow v_\lambda$ is bounded by $\|\Psi(0)\|$.  Applying
 the monotonicity lemma to each component of this curve, we obtain that $v_{\lambda}$ is piecewise $C^1$, has a limit as $\lambda$ goes to $0$ which we denote by $v=v_{0}$. In order to establish that
 \begin{equation}\label{eq:int}
 \int_0^1\|\frac{d}{d\lambda}v_\lambda\|\,d\lambda<+\infty,
 \end{equation}
we first observe that there exists a constant $\mu>0$ such that $\|\cdot\|\leq \mu \|\cdot\|_1$.
 It suffices thus to establish that \eqref{eq:int} holds for the specific case of the 1-norm. Applying  simultaneously the monotonicity lemma to the  coordinate functions of $v_{\lambda}$, we obtain the existence of $\epsilon \in (0,1)$ such that $v_{\lambda}$ is in $C^1(0,\epsilon)$ and such that each coordinate is monotonous on this interval.

   This shows that
\[
\int_0^{\epsilon} \left\|\frac{d}{d\lambda}v_\lambda\right\|_{1} d\lambda=\sum_{i=1}^d
\int_0^{\epsilon}|v'_{\lambda}(\omega_i)| d\lambda=\sum_{i=1}^d \left|(v_{\epsilon})(\omega_i)-(v_{0})(\omega_i)\right|=\|v_{\epsilon}-v_0\|_1,
\]
 and \eqref{eq:int} follows.

   Let  $\bar \lambda$ such that $\lambda\rightarrow v_{\lambda}$ is $C^1$ on $(0,\bar\lambda)$. Let $\lambda>\mu$ be in $(0,\bar\lambda)$.
   Then  for any decreasing sequence  $(\lambda_i)_{i\in \N}$ in $(\lambda,\mu)$, we have
   \begin{eqnarray}\label{estim1}
\sum_{i=1}^{+\infty} \|v_{\lambda_{i+1}}-v_{\lambda_i}\|\leq \int_{\mu}^{\lambda}\|\frac{d}{d\lambda} v_{\lambda}\|ds.
\end{eqnarray}
Indeed
$\|v_{\lambda_{i+1}}-v_{\lambda_i}\|\leq\| \int_{\lambda_{i+1}}^{\lambda_i}\frac{d}{d\lambda}v_{\lambda}d\lambda\|\leq \int_{\lambda_{i+1}}^{\lambda_i}\|\frac{d}{d\lambda}v_{\lambda}\|d\lambda,$
so that the result follows by summation.

The map $\lambda\rightarrow v_\lambda$ is thus of bounded variation, and $(i)$ follows from Neyman's proof that the latter property implies the convergence of $\Psi^n(0)/n$ to the limit $v:=\lim_{\lambda\to 0^+} v_\lambda$~\cite{Ney03}. Some intermediary results in Neyman's proof
are necessary to establish the rate of convergence of $(ii)$;
we thus include the
remaining part of the proof of $(i)$. First observe that
\begin{equation}\label{estim0}\|\frac{1}{n}\Psi^n(f)-\frac{1}{n}\Psi^n(0)\|\leq \frac{1}{n}\|f\|,\; \forall f\in \K \end{equation} for all positive integers $n$, so it suffices to establish the convergence result for $f=0$.

For $n$ in $\N$, define
\[ d_n:=\|nv_{1/n}-\Psi^n(0)\|=\|V_{1/n}-\Psi^n(0)\|,
\]
and let us prove that $n^{-1}d_n$ tends to zero as $n$ goes to infinity.
If $n>0$, we have
\begin{align}
d_{n}&=\|\Psi((n-1)v_{1/n})-\Psi^n(0)\|\nonumber \\
&\leq\|(n-1)v_{1/n}-\Psi^{n-1}(0)\|\nonumber\\
&\leq d_{n-1} + (n-1)\|v_{1/n}-v_{1/n-1}\|.\label{e-iterate}
\end{align}
Let
\[
D_n := \sum_{i\geq n}\|v_{1/i+1}-v_{1/i}\|  <\infty \enspace,
\]
Using~\eqref{e-iterate} and a discrete integration by parts,
we get
\begin{align}
d_n&\leq
 \sum_{i=1}^{n-1}
 i \|v_{1/i+1}-v_{1/i}\|+d_1 \\
&=  \sum_{i=1}^{n-1}
 i (D_i - D_{i+1})+d_1
=
\sum_{i=1}^{n-1}D_i -(n-1)D_n + d_1  \enspace .
\label{e-abel}
\end{align}
Since $D_n$ tends to $0$ as $n\to\infty$, the Ces\`aro
sum $n^{-1}\sum_{i=1}^{n-1}D_i$ also tends to $0$ as $n\to\infty$.
Then, it follows from~\eqref{e-abel} that $n^{-1}d_n$ tends
to $0$ as $n\to\infty$.


Finally $\left\|v_{1/n}-\frac{1}{n}\Psi^n(0)\right\|$ tends to 0
as $n$ goes to infinity. We know from the monotonicity lemma
(or from the fact that $v_\lambda$ as bounded variation) that $v_{1/n}$ converges
to some $v$. It follows that $\frac{1}{n}\Psi^n(0)$ also converges to $v$.

\medskip

We now prove (ii). Assume that $\Psi$ is definable in a polynomially bounded structure and recall that the monotonicity lemma implies the existence of $\bar \lambda\in (0,1)$ such that
$v$ is $C^1$ on the open interval $(0,\bar \lambda)$ and continuous on $[0,\bar\lambda)$.  Since $\mathcal O$ is a polynomially bounded structure and since the first derivative of $v$ is also definable in $\cal O$ (see Proposition~\ref{defder}), there exist $\gamma$ and $c_1\geq 0$ such that $\|\frac{d}{d\lambda}v_{\lambda}\|= c_1\lambda^{-\gamma} + o(\lambda^{-\gamma})$ (see Corollary~\ref{c:polbon}). If we are able to deal with the case when $\gamma$ is positive, the other case follow trivially. Assume thus that $\gamma$ is positive; note that, since $\frac{d}{d\lambda}v_{\lambda}$ is integrable, we must also have $\gamma<1$. Let $c_2>0$ be  such that
$$ \|\frac{d}{d\lambda} v_{\lambda}\|\leq c_2\lambda^{-\gamma},$$
for all positive $\lambda$ small enough.
Let us now consider a positive integer $i$ which is sufficiently large; by using \eqref{estim1}, we have
\begin{eqnarray}
i\|v_{1/i}-v_{1/i+1}\| & \leq & i \int_{\frac{1}{i+1}}^{\frac{1}{i}}\|\frac{d}{d\lambda}v_{\lambda}\|d\lambda\\ \nonumber
             & \leq & i \int_{\frac{1}{i+1}}^{\frac{1}{i}}c_2\lambda^{-\gamma}d\lambda\\ \nonumber
             & \leq & \int_{\frac{1}{i+1}}^{\frac{1}{i}}c_2\lambda^{-1}\lambda^{-\gamma}d\lambda\\ \nonumber
&= & c_2 \left[\frac{1}{-1-\gamma}\lambda^{-\gamma}\right]_{\frac{1}{i+1}}^{\frac{1}{i}}\\
                            & = & \frac{c_2}{1+\gamma} \left((i+1)^{\gamma}-i^{\gamma}\right) \label{maj}
\end{eqnarray}
Replacing $c_2$ by a bigger constant, we may actually assume that \eqref{maj} holds for all positive integers. Hence
\begin{eqnarray*}
||v_{\frac{1}{n}}-\frac{\Psi^n(0)}{n}||=n^{-1}d_n &\leq &n^{-1}\sum_{i=1}^{n} i \|v_{1/i+1}-v_{1/i}\|-n^{-1}d_1\\
                & \leq &n^{-1}\sum_{i=1}^{n} \frac{c_2}{1+\gamma} (i^{\gamma}-(i+1)^{\gamma})-n^{-1}d_1\\
                   & \leq & \frac{c_2}{1+\gamma}\frac{(n+1)^{\gamma}}{n}-n^{-1}d_1\\
                   &= & O\left(\frac{1}{n^{1-\gamma}}\right).
\end{eqnarray*}
Recalling the estimate~\eqref{estim0} and  observing that

\begin{eqnarray*}
\| \frac{\Psi^n(0)}{n}-v\| & \leq & \| \frac{\Psi^n(0)}{n}-v_{\frac{1}{n}}\| +\| v_{\frac{1}{n}}-v\|\\
    & = & O\left(\frac{1}{n^{1-\gamma}}\right)+\int_0^{\frac{1}{n}}\|\frac{d}{d\lambda}v_{\lambda}\|d\lambda\\
    & \leq & O\left(\frac{1}{n^{1-\gamma}}\right)+\int_0^{\frac{1}{n}}c_2\frac{1}{\lambda^{\gamma}}d\lambda=O\left(\frac{1}{n^{1-\gamma}}\right)
\end{eqnarray*}
the conclusion follows by setting $\theta=1-\gamma$ ($\theta\in(0,1)$). 
\end{proof}

\smallskip

The above result and some of its consequences can be recast within game theory as follows. Point (iii) of the following corollary is essentially due to Mertens-Neymann  \cite{MertNeym81}.

\begin{corollary}[Games values and Shapley operators]\label{c:shapdef} If the Shapley operator of a stochastic game is definable the following assertions hold true.
\begin{enumerate}
\item[(i)]  The limits of $v_{\lambda}$ and $v_n$ coincide, i.e.\
$$\lim_{n\rightarrow+\infty}v_n=\lim_{\lambda\rightarrow 0} v_{\lambda}:=v_{\infty}.$$
\item[(ii)] If $\Phi$ is definable in a polynomially bounded o-minimal structure, there exists $\theta \in (0,1]$ such that
$$\|v_n-v_{\infty}\|=O(\frac{1}{n^{\theta}}).$$
\item[(iii)] {\bf (Mertens-Neyman, \cite{MertNeym81})} The game has a uniform value.
\end{enumerate}
\end{corollary}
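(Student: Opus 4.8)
The plan is to deduce Corollary~\ref{c:shapdef} from Theorem~\ref{conv} together with Shapley's theorem, treating the three items in order since each builds on the previous one. First I would recall from Shapley's theorem that $v_n=\frac1n\Psi^n(0)$ and that $v_\lambda=\lambda\Psi(\frac{1-\lambda}{\lambda}v_\lambda)$, which is exactly the fixed-point equation $V_\lambda=\Psi((1-\lambda)V_\lambda)$, $v_\lambda=\lambda V_\lambda$, appearing in the proof of Theorem~\ref{conv}. Since the Shapley operator $\Psi$ of a stochastic game is nonexpansive for $\|\cdot\|_\infty$ (stated in the excerpt) and maps $\R^d$ to $\R^d$, we may take $\K=\R^d$, which is trivially a nonempty definable closed cone, and apply Theorem~\ref{conv} with $f=0$. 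Part (i) of that theorem then yields convergence of $\frac1n\Psi^n(0)=v_n$ to some $v\in\R^d$; and inspecting the proof, this limit $v$ is precisely $\lim_{\lambda\to0^+}v_\lambda$ (the proof shows $\|v_{1/n}-\frac1n\Psi^n(0)\|\to0$ and $v_{1/n}\to v$, and the monotonicity lemma gives that the whole curve $\lambda\mapsto v_\lambda$, being piecewise $C^1$ and bounded, converges). Hence $\lim_n v_n=\lim_{\lambda\to0}v_\lambda=:v_\infty$, which is item (i).

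For item (ii), I would simply invoke Theorem~\ref{conv}(ii) with $f=0$: if $\Psi$ is definable in a polynomially bounded structure, then $\|\frac1n\Psi^n(0)-v\|\le \frac{c}{n^\theta}$ for some $\theta\in(0,1)$, i.e.\ $\|v_n-v_\infty\|=O(n^{-\theta})$. (One should note the harmless typo in the corollary statement, where $\Phi$ should read $\Psi$, and that $\theta\in(0,1)$ suffices so a fortiori $\theta\in(0,1]$.) No extra work is needed here beyond recording that $v_n=\frac1n\Psi^n(0)$.

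For item (iii), the uniform value, I would argue as follows. The proof of Theorem~\ref{conv} established the key analytic fact that $\lambda\mapsto v_\lambda$ is definable, piecewise $C^1$, bounded, and of \emph{bounded variation} on $(0,1]$ (this is inequality~\eqref{eq:int} together with the summation argument leading to~\eqref{estim1}). Neyman's theorem (cited as \cite[Th.~4]{Ney03}) shows that bounded variation of $\lambda\mapsto v_\lambda$ guarantees convergence of $v_n$; but more is true. By Mertens-Neyman \cite{MertNeym81}, whenever $\lambda\mapsto v_\lambda$ has bounded variation the stochastic game has a uniform value. Since definability of the Shapley operator forces $\lambda\mapsto v_\lambda$ to be of bounded variation (by the argument reproduced inside the proof of Theorem~\ref{conv}), the Mertens-Neyman criterion applies and the game has a uniform value $v_\infty$, which by item (i) coincides with $\lim_n v_n=\lim_\lambda v_\lambda$.

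The only genuine subtlety — and the step I expect to require the most care — is item (iii): one must make sure the Mertens-Neyman result is invoked in the correct form. Their original statement concerns finite stochastic games, but the operator-theoretic reading (as used by Neyman in \cite{Ney03} and emphasized in the ``operator approach'') only needs that $\Psi$ is the nonexpansive Shapley operator of the given game and that $\lambda\mapsto v_\lambda$ has bounded variation; under assumption ($\mathcal{A}$) the values $v_n,v_\lambda$ exist and are characterized by $\Psi$ via Shapley's theorem, so the hypotheses of the Mertens-Neyman argument are met. Everything else is a direct transcription of Theorem~\ref{conv} into game-theoretic language using $v_n=\frac1n\Psi^n(0)$ and $v_\lambda=\lambda\Psi(\frac{1-\lambda}{\lambda}v_\lambda)$.
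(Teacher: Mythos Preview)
Your proposal is correct and follows essentially the same approach as the paper: items (i) and (ii) are obtained by applying Theorem~\ref{conv} (with $\K=\R^d$ and $f=0$) via the identifications $v_n=\frac{1}{n}\Psi^n(0)$ and $v_\lambda=\lambda\Psi(\frac{1-\lambda}{\lambda}v_\lambda)$, and item (iii) follows from the bounded-variation estimate established in that proof together with the Mertens--Neyman theorem. You even anticipated the one delicate point the paper flags in a footnote, namely that \cite{MertNeym81} is stated for finite games but the argument only uses the inequality $\|v_\lambda-v_\mu\|\leq\int_\lambda^\mu\phi(s)\,ds$ for an $L^1$ function $\phi$.
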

\noindent
\begin{proof}{Proof.}
Since the Shapley Operator of a game is nonexpansive for the supremum norm, the two first points are a mere rephrasing of the proof of Theorem~\ref{conv}. Concerning the last one, we note from the proof (see \eqref{eq:int}), that there exists an $L^1$ definable function $\phi:(0,1)\rightarrow \R_+$ such that
\begin{equation}\label{eq:boundvar}
\|v_{\lambda}-v_{\mu}\|\leq\int_{\lambda}^{\mu}\phi(s)ds,
\end{equation}
whenever $\lambda<\mu$ are in $(0,1)$. Applying \cite[Theorem of p. 54]{MertNeym81}, the result follows (\footnote{In \cite{MertNeym81} the authors uniquely consider {\em finite} stochastic games, however their proof relies only on the property~\eqref{eq:boundvar}. We are indebted to X.~Venel for his valuable advices on this aspect.}). 
\end{proof}

\begin{remark}
The first two items of Corollary~\ref{c:shapdef} remain true if we do not assume that players observe the actions (since the value $v_\lambda$ does not depend on this observation). Similarly the third item remains true if players only observe the sequence of states and the stage payoffs.
\end{remark}

\begin{remark}[Stationary strategies]\label{r:stat}
When the action sets are infinite, we do not know in general if  the correspondences of optimal stationary actions in the discounted game,
$$\lambda\rightarrow X_\lambda(\omega_i),\:\: \lambda\rightarrow Y_\lambda(\omega_i),\,i=1,\ldots,d,$$
are definable. However, in the particular case of games with perfect observation, the existence of optimal \emph{pure} stationary strategies ensures that for each state $\omega_i$ the above correspondence are indeed definable.
\end{remark}

\begin{remark}[Regularity of definable Shapley operators]
In the particular case of finite games,   more is known: it is proved in \cite{Milman02} that the real $\theta$ in (ii) can be chosen depending only on the dimension (number of states and actions) of the game. These global aspects cannot be deduced directly from our abstract approach in Theorem~\ref{conv}. However we think that similar results could be derived for {\em definable families of Shapley operators induced by definable families of games } as those described in Section~\ref{sectionvaluesto}.
\end{remark}

\begin{remark}[Semi-smoothness of Shapley operators]\label{qi}
The definability of the Shapley operator
 and its Lipschitz continuity imply by \cite[Theorem 1]{BDL09} its semi-smoothness. Since the works of Qi and Sun \cite{qiliqun}, the semi-smoothness condition has been identified as an essential ingredient behind the  good local behavior of nonsmooth Newton's methods. We think that this type of regularity might help game theorists in designing/understanding algorithms for computing values of stochastic games. Interested readers are referred to  \cite[Section 3.3]{FiVr97} for related topics and possible links with iterating policy methods.
\end{remark}
\section{Definability of the value function for parametric games}\label{sec-one-shot}


Let $\mathcal O$ be an o-minimal structure over $(\R,+,.)$. The previous section showed the importance of proving the definability of the Shapley operator of a game.\\

 Recall that the Shapley operator associates to each vector $f$ in $\R^d$, the values of $d$ zero-sum games
$$\max_{\mu\in \Delta(X)}\min_{\nu \in \Delta(Y)} \int_X\int_Y\left[ g(x,y,\omega_k)+ \sum_{i=1}^d \rho(\omega_i|x,y,\omega_k)f_i\right]d\mu d\nu,$$
where $k$ ranges over $\{1,\ldots,d\}$.
 Hence each coordinate function of the operator can be seen as the value of a static zero-sum game depending on a vector parameter $f$. In this section we thus turn our attention to the analysis of {\em parametric zero-sum games} with definable data.

\smallskip

Consider nonempty compact sets $X\subset \R^p,Y\subset\R^q$, an arbitrary nonempty set $Z\subset\R^d$ and a continuous pay-off function
$\g:X\times Y \times Z  \rightarrow  \R.$ The sets
$X$ and $Y$ are action spaces for players 1 and 2, whereas $Z$ is a {\em parameter space}. Denote by $\Delta(X)$ (resp.\ $\Delta(Y)$) the set of probability measures over $X$ (resp.\ $Y$).  When $z\in Z$ is fixed, the mixed extension of $g$ over  $\Delta(X)\times\Delta(Y)$ defines a zero-sum game $\Gamma(z)$ whose value is denoted by $V(z)$ (recall that the $\max$ and $\min$ commutes by Sion's theorem):

\begin{eqnarray}
V(z)&=&\max_{\mu\in \Delta(X)}\min_{\nu \in \Delta(Y)} \int_X\int_Y \g(x,y,z) d\mu d\nu\\
&=& \min_{\nu \in \Delta(Y)} \max_{\mu\in \Delta(X)} \int_X\int_Y \g(x,y,z) d\mu d\nu.
\end{eqnarray}

 In the sequel a parametric zero-sum  game is denoted by $(X,Y,Z,\g)$; when the objects $X,Y,Z,\g$ are definable, the game $(X,Y,Z,\g)$ is called {\em definable}.

The issue we would like to address in this section is: can we assert that the value function $V:Z\rightarrow \R$ is definable in $\mathcal O$ whenever the game $(X,Y,Z,\g)$ is definable in $\mathcal O$?

\smallskip

 As shown in  a forthcoming section, the answer to the previous question is not positive in general; but as we shall see additional algebraic or geometric structure may ensure the definability of the value function.

\subsection{Separable parametric games}

The following type of games and the ideas of convexification used in their studies seems to originate in the work of Dresher-Karlin-Shapley \cite{DresKarlShap50} (where these games appear as {\em polynomial-like games}).

When $x_1,\ldots,x_m$ are vectors in $\R^p$, the convex envelope of the family $\{x_1,\ldots,x_m\}$ is denoted by
$$\mbox{co}\,\{x_1,\ldots,x_m\}.$$

\begin{definition}[Separable functions and games]{\rm Let
$X\subset\R^p$, $Y\subset \R^q$, $Z\subset \R^d$ and\\
$\g:X\times Y\times Z\rightarrow \R$ be as above.\\
(i)  The function $\g$  is called {\em separable} with respect to  the variables $x,y$, if it is of the form
\[
\g(x,y,z)=\sum_{i=1}^{I}\sum_{j=1}^{J}  m_{ij}(z)a_i(x,z) b_j(y,z).
\]
where $I,J$ are positive integers and the $a_i$, $b_j$, $m_{ij}$ are continuous functions. \\
The function $\g$ is called {\em separably definable,} if in addition the functions  $a_i$, $b_j$, $m_{ij}$ are definable.\\
(ii) A parametric game  $(X,Y,Z,\g)$ is called {\em separably definable}, if its payoff function $\g$ is itself separably definable.}
\end{definition}

\begin{proposition}[Separable definable parametric games]\label{p:sep} Let $(X,Y,Z,\g)$ be a separably definable zero-sum game. Then the value function $Z\ni z\rightarrow V(z)$ is definable in $\mathcal O$.
\end{proposition}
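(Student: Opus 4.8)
The plan is to reduce the computation of $V(z)$ to a finite-dimensional min-max problem over simplices, using the classical convexification trick for separable games (as in Dresher--Karlin--Shapley), and then to express that finite-dimensional problem by a first order definable formula. First I would fix $z\in Z$ and consider the map $\Phi_z:\Delta(X)\to\R^I$ defined by $\Phi_z(\mu)=\left(\int_X a_1(x,z)\,d\mu(x),\ldots,\int_X a_I(x,z)\,d\mu(x)\right)$, and similarly $\Psi_z:\Delta(Y)\to\R^J$ with the $b_j$. Writing $\g(x,y,z)=\sum_{i,j} m_{ij}(z)a_i(x,z)b_j(y,z)$, the payoff under $(\mu,\nu)$ is the bilinear form $\sum_{i,j} m_{ij}(z)\,\Phi_z(\mu)_i\,\Psi_z(\nu)_j$, so it depends on $(\mu,\nu)$ only through $(\Phi_z(\mu),\Psi_z(\nu))$. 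Since $X$ and $Y$ are compact and the $a_i,b_j$ continuous, the sets $A(z):=\Phi_z(\Delta(X))$ and $B(z):=\Psi_z(\Delta(Y))$ are compact convex subsets of $\R^I$ and $\R^J$ respectively; in fact $A(z)=\co\{(a_1(x,z),\ldots,a_I(x,z)):x\in X\}$ and similarly for $B(z)$, by the fact that every point of $\Delta(X)$ pushed forward is a limit of convex combinations of Dirac masses, together with compactness. Hence $V(z)=\max_{\alpha\in A(z)}\min_{\beta\in B(z)} \sum_{i,j} m_{ij}(z)\alpha_i\beta_j$, a min-max of a function that is bilinear (hence concave-convex) in $(\alpha,\beta)$ over compact convex sets.

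Next I would argue definability of the data of this finite-dimensional game as $z$ varies. The set $\{(x,z,\alpha)\in X\times Z\times\R^I : \alpha_i=a_i(x,z),\ i=1,\ldots,I\}$ is definable since each $a_i$ is definable; projecting out $x$ gives that $G_A:=\{(z,\alpha): \exists x\in X,\ \alpha_i=a_i(x,z)\ \forall i\}$ is definable, and then $\{(z,\alpha):\alpha\in\co(\text{fiber of }G_A\text{ over }z)\}$ is definable because, by Carathéodory's theorem in $\R^I$, membership in the convex hull is expressed by the first order formula ``there exist $I+1$ points of the fiber and $I+1$ nonnegative reals summing to $1$ whose combination equals $\alpha$''. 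This exhibits the graph of $z\mapsto A(z)$ as a definable set, and likewise for $z\mapsto B(z)$. The coefficients $m_{ij}(z)$ are definable by hypothesis. Therefore the graph of $V$,
$$\graph V=\Big\{(z,r): \big(\forall \beta\in B(z),\ \exists \alpha\in A(z),\ \textstyle\sum_{i,j}m_{ij}(z)\alpha_i\beta_j\ge r\big)\ \wedge\ \big(\forall \alpha\in A(z),\ \exists\beta\in B(z),\ \textstyle\sum_{i,j}m_{ij}(z)\alpha_i\beta_j\le r\big)\Big\},$$
is defined by a first order definable formula — here I use Sion's theorem to know that the value exists and that the two mixed descriptions (the $\max\min$ bound from below and the $\min\max$ bound from above) together pin down $r=V(z)$ exactly; alternatively one can write the formula for $\sup\inf$ with an $\epsilon$ as in Example~\ref{ex}. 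By Proposition~\ref{p:FO1}, $\graph V$ is definable, hence $V$ is definable in $\mathcal O$.

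The step I expect to be the main obstacle, or at least the one requiring care, is the passage from the infinite-dimensional game over $\Delta(X)\times\Delta(Y)$ to the finite-dimensional game over $A(z)\times B(z)$ — specifically, verifying that this reduction preserves the value (not merely that it gives an inequality) and that the sets $A(z),B(z)$ are exactly the convex hulls of the ``moment curves'' $x\mapsto(a_i(x,z))_i$ and $y\mapsto(b_j(y,z))_j$. The inequality $V(z)\ge$ (value of finite game) and its reverse both follow from the bilinear structure: any mixed strategy $\mu$ realizes the moment vector $\Phi_z(\mu)\in A(z)$, and conversely any $\alpha\in A(z)$, being a convex combination $\sum_k\lambda_k\,(a_i(x^k,z))_i$ of at most $I+1$ vertices, is realized by the finitely supported mixed strategy $\sum_k\lambda_k\delta_{x^k}$; since the payoff only sees the moments, the two games have identical values. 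Compactness of $X,Y$ and continuity of the $a_i,b_j$ guarantee $A(z),B(z)$ are compact, so the outer $\max$ and inner $\min$ are attained and Sion's theorem applies on the finite-dimensional side as well. Once this equivalence is in hand, the definability bookkeeping via Carathéodory and first order formulas is routine.
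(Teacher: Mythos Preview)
Your proof is correct and follows essentially the same route as the paper's: reduce the infinite-dimensional mixed game to a finite-dimensional bilinear min-max over the convex hulls $A(z)=\co\{(a_i(x,z))_i:x\in X\}$ and $B(z)=\co\{(b_j(y,z))_j:y\in Y\}$, invoke Carath\'eodory's theorem to write the graphs of $z\mapsto A(z)$ and $z\mapsto B(z)$ by first order definable formulas, and conclude by Proposition~\ref{p:FO1}. The only cosmetic difference is that the paper defines the auxiliary function $W(z)=\sup_{S\in A(z)}\inf_{T\in B(z)} S M(z) T^t$, observes its definability directly (as in Example~\ref{ex}), and then proves $W=V$ via two one-line inequalities and a minmax argument, whereas you pin down $\operatorname{graph} V$ by a single formula combining the $\max\min$ lower bound and the $\min\max$ upper bound---but these are equivalent formulations of the same idea.
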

\begin{proof}{Proof.}
Let us consider the correspondence
 ${\mathcal L}:Z \rightrightarrows \R^{I}$ defined by
 $${\mathcal L}(z)=\co \{(a_1(x,z),\cdots,a_I(x,z)) : x\in X\}$$ and define
$\mathcal M:Z \rightrightarrows \R^J$
similarly by
${\mathcal M}(z)=\co \{(b_1(y,z),\cdots,b_J(y,z)) : y\in Y\}.$ Using Carath\'eodory's theorem, we observe that the graph of $\cal L$ is defined by a first order formula, as $(z,s)\in \mbox{graph}\,{\cal L}\subset Z\times \R^I$ if and only if
\[ \exists (\lambda_1,\ldots,\lambda_{I+1})\in \R_+^{I+1},\exists (x_1,\ldots,x_{I+1})\in X^{I+1}, \sum_{i=1}^{I+1}\lambda_i=1, s=\sum_{i=1}^{I+1}\lambda_i a_i(x_i,z)
\enspace .\]
 This ensures the definability of $\cal L$ and $\cal M$. Let us introduce the definable matrix-valued function
 $$Z\ni z\rightarrow M(z)=[m_{ij}(z)]_{1\leq i \leq I\,, 1\leq i \leq J}$$ and the mapping
$$W(z)=\sup_{S\in{\mathcal L}(z)}\inf_{T\in{\mathcal M}(z)}\, SM(z)T^t.$$
Using again Proposition~\ref{p:FO1}, we obtain easily that $W$ is definable. Let us prove that $W=V$, which will conclude the proof.
Using the linearity of the integral
\begin{eqnarray*}
W(z)=\sup_{S\in{\mathcal L}(z)}\inf_{T\in{\mathcal M}(z)}SM(z)T^t
&=&\sup_{S\in{\mathcal L}(z)}\inf_{y\in  Y} \sum_{i=1}^I\sum_{j=1}^J m_{ij}(z)S_i \, b_j(y,z)\\
&\leq & \sup_{\mu\in\Delta(X)}\inf_{y\in Y} \int_X \g(x,y,z) d\mu \\
&=&V(z).
\end{eqnarray*}
An analogous inequality for $\inf\sup$ and a minmax argument imply
the result. 
\end{proof}


\subsection{Definable parametric games with convex payoff}

Scalar products on $\R^m$  spaces are denoted by $\langle\cdot,\cdot\rangle$. \\
We consider parametric games $(X,Y,Z,\g)$ such that:
 \begin{equation}\label{e:convex} Y \mbox{ and the partial payoff }\g_{x,z}:\left\{\begin{array}{lll} Y& \rightarrow & \R\\
  y & \rightarrow & \g(x,y,z)
  \end{array}\mbox{ are both convex.}\right.\end{equation}
One could alternatively assume that $X$ is convex and that player 1 is facing a concave function $\g_{y,z}$ for each $y,z$ fixed.

We recall some well-known concepts of convex analysis (see \cite{Rock70}). If $f:\R^p\rightarrow (-\infty,+\infty]$ is a convex function
 its {\em subdifferential} $\partial f(x)$ at $x$ is defined by
 $$x^*\in\partial f(x)\Leftrightarrow f(y)\geq f(x)+\langle x^*,y-x\rangle, \forall y\in \R^p,$$
 whenever $f(x)$ is finite; else we set $\partial f(x)=\emptyset$.
 When $C$ is a closed convex set and $x\in C$, the {\em normal cone} to $C$ at $x$ is given by
 $$N_C(x):=\left\{v\in\R^p: \langle v,y-x\rangle\leq 0, \forall y\in C\right\}.$$
 The indicator function of $C$, written $I_C$, is defined by $I_C(x)=0$ if $x$ is in $C$, $I_C(x)=+\infty$ otherwise. It is straightforward
 to see that $\partial I_C=N_C$ (where we adopt the convention $N_C(x)=\emptyset$ whenever $x\notin C$).

 \smallskip

\begin{proposition}\label{convexparametric} Let $(X,Y,Z,\g)$ be a zero-sum parametric game. Recall that $X\subset \R^p, Y\subset \R^q$ are nonempty compact sets and $\emptyset\neq Z\subset\R^d$ is arbitrary.

Assume that $Y$ and $\g$ satisfy \eqref{e:convex}. Then
\begin{enumerate}
\item[(i)] The value $V(z)$ of the game coincides with
$$\max_{\begin{array}{r}(x_1,\ldots,x_{q+1}) \in X^{q+1}\\ \lambda\in\Delta_{q+1}\end{array}}\;\min_{\begin{array}{r}y\in Y\\  \end{array}} \: \, \sum_{i=1}^{q+1} \lambda_i \g(x_i,y,z),$$
where $\Delta_{q+1}=\{(\lambda_1,\ldots,\lambda_{q+1}) \in \R_+:\sum_{i=1}^{q+1}\lambda_i=1\}$ denotes the $q+1$ simplex.
\item[(ii)] If the payoff function $\g$ is definable then so is the value mapping $V$.
\end{enumerate}
\end{proposition}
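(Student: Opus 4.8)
The key reduction is point (i): for a fixed parameter $z$, convexity of $Y$ and of $y\mapsto \g(x,y,z)$ lets us exchange the $\min$ over $\Delta(Y)$ for a $\min$ over the compact convex set $Y$ itself, while Carath\'eodory's theorem lets us replace the $\max$ over $\Delta(X)$ by a $\max$ over finitely many points of $X$ with weights in the simplex $\Delta_{q+1}$. Granting (i), the second assertion (ii) is then a routine application of the first-order-formula machinery (Proposition~\ref{p:FO1}) exactly as in the proof of Proposition~\ref{p:sep}.

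\medskip

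\noindent\textbf{Step 1: from mixed strategies of player~2 to pure strategies.} Fix $z\in Z$ and a mixed strategy $\mu\in\Delta(X)$ of player~1. The function $y\mapsto \int_X \g(x,y,z)\,d\mu(x)$ is a convex function on the convex compact set $Y$ (an average of convex functions), so its infimum over $\Delta(Y)$ equals its infimum over $Y$: indeed, for any $\nu\in\Delta(Y)$, Jensen's inequality gives $\int_Y \big(\int_X \g\,d\mu\big)d\nu \geq \int_X \g(x,\bar y,z)\,d\mu(x)$ where $\bar y=\int_Y y\,d\nu(y)\in Y$ is the barycenter. Hence
$$
V(z)=\max_{\mu\in\Delta(X)}\ \min_{y\in Y}\ \int_X \g(x,y,z)\,d\mu(x).
$$

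\medskip

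\noindent\textbf{Step 2: Carath\'eodory for player~1.} Consider, for fixed $z$, the map $x\mapsto G(x):=\big(\g(x,y,z)\big)_{y\in Y}$ into $\R^Y$; the quantity to maximize, $\mu\mapsto \min_{y\in Y}\int_X \g(x,y,z)\,d\mu(x)$, is a concave function of the vector $\int_X G\,d\mu$, which ranges over the convex hull of $\{G(x):x\in X\}$. A finite-dimensional reduction is available because only the $q+1$ ``active'' coordinates matter: more concretely, one argues that an optimal $\mu$ may be taken supported on at most $q+1$ points. The cleanest route is to invoke the classical result on separable/bilinear-type games or to apply Carath\'eodory in $\R^{q+1}$ directly: the value $\min_{y\in Y}$ of a convex function on the $q$-dimensional convex body $Y$ is attained at a point that is a convex combination of at most $q+1$ extreme situations, and a minimax/duality argument (Sion's theorem applied to the pair $\co\{G(x)\}\times Y$) produces an optimal $\mu$ that is a convex combination $\sum_{i=1}^{q+1}\lambda_i\delta_{x_i}$. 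This yields
$$
V(z)=\max_{\substack{(x_1,\dots,x_{q+1})\in X^{q+1}\\ \lambda\in\Delta_{q+1}}}\ \min_{y\in Y}\ \sum_{i=1}^{q+1}\lambda_i\,\g(x_i,y,z),
$$
which is (i). The $\leq$ direction is immediate (any finitely-supported $\mu$ is a legal mixed strategy); the $\geq$ direction is the content of the Carath\'eodory/Sion argument just sketched.

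\medskip

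\noindent\textbf{Step 3: definability.} Assume $\g$ is definable. Then $X^{q+1}$, $\Delta_{q+1}$, $Y$ and the function $(x_1,\dots,x_{q+1},\lambda,y,z)\mapsto \sum_{i=1}^{q+1}\lambda_i\g(x_i,y,z)$ are all definable, the last because finite sums and products of definable functions are definable (Proposition~\ref{p:image}). The graph of $V$ is then
$$
\operatorname{graph}V=\Big\{(z,r): \big(\forall (x,\lambda,y)\ \ \exists(x',\lambda')\ \ \ldots\big)\Big\},
$$
i.e.\ ``$r=\max_{(x,\lambda)}\min_y \sum\lambda_i\g(x_i,y,z)$'' written out with the usual $\forall/\exists$ quantifiers over the definable sets $X^{q+1}$, $\Delta_{q+1}$, $Y$ — a first order definable formula in the sense of the paper. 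By Proposition~\ref{p:FO1}, $\operatorname{graph}V$ is definable, hence $V$ is definable in $\mathcal O$. This proves (ii).

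\medskip

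\noindent\textbf{Main obstacle.} The only non-routine point is the $\geq$ inequality in Step~2, i.e.\ justifying that an optimal strategy of player~1 can be taken to be supported on $q+1$ points. One must be slightly careful that $\co\{G(x):x\in X\}$ lives in an infinite-dimensional space, so Carath\'eodory cannot be applied to it directly; the trick is to apply it after observing that only the values against the (at most $q+1$) extreme points relevant to the inner $\min$ over the $q$-dimensional set $Y$ enter — equivalently, one applies Sion's theorem to the compact convex sets $\co\{(\,\g(x,y_0,z),\dots)\,\}\subset\R^{q+1}$ obtained by restricting attention to an optimal collection of $y$'s, and then uses Carath\'eodory in $\R^{q+1}$. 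All the rest is bookkeeping with first order formulas, entirely parallel to Proposition~\ref{p:sep}.
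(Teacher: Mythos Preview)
Your Steps~1 and~3 are fine, but Step~2 is where the proof actually lives, and your argument there does not go through as written. You correctly identify that the only nontrivial direction is showing that an optimal mixed strategy for player~1 can be taken supported on at most $q+1$ points, and you correctly note that Carath\'eodory cannot be applied to $\co\{G(x):x\in X\}$ directly because this set sits in the infinite-dimensional space $C(Y)$. But your proposed fix --- ``restrict attention to an optimal collection of at most $q+1$ points $y$'' and then apply Carath\'eodory in $\R^{q+1}$ --- is circular. The compact convex set $Y\subset\R^q$ generically has a continuum of extreme points, and there is no a~priori finite ``active'' set of $y$'s against which to test $\mu$; producing such a set is essentially equivalent to what you are trying to prove. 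Invoking an unspecified ``classical result on separable/bilinear-type games'' defers rather than supplies the argument.

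The paper obtains the bound $q+1$ by a different mechanism: it works on the $y$-side via convex subdifferential calculus. One sets $\Phi(y,z)=\max_{x\in X}\g(x,y,z)$, takes $\bar y\in\arg\min_{Y}\Phi(\cdot,z)$, and uses the optimality condition $0\in\partial\Phi(\bar y,z)+N_Y(\bar y)$. The envelope formula for a pointwise maximum gives $\partial\Phi(\bar y,z)=\co\{\partial_y\g(x,\bar y,z):x\in J(\bar y,z)\}$, a convex subset of $\R^q$; Carath\'eodory in $\R^q$ then yields $\mu\in\Delta_{q+1}$ and $x_1,\dots,x_{q+1}\in J(\bar y,z)$ with $0\in\sum_i\mu_i\,\partial_y\g(x_i,\bar y,z)+N_Y(\bar y)$. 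This simultaneously certifies that $\bar y$ minimises $y\mapsto\sum_i\mu_i\g(x_i,y,z)$ over $Y$ and that each $x_i$ maximises $\g(\cdot,\bar y,z)$, so $(\sum_i\mu_i\delta_{x_i},\bar y)$ is a saddle point with value $V(z)$. The number $q+1$ thus comes from the dimension of the \emph{subgradient} space, not from any finite selection of $y$'s. Your outline is salvageable, but only by inserting exactly this subdifferential/envelope step; the purely ``finite-dimensional reduction on the $y$-side'' you sketch does not supply it.
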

\begin{proof}{Proof.}
Item (ii) follows from the fact that (i) provides a first order formula that describes the graph of $V$.

Let us establish (i).  In what follows $\partial $ systematically denotes the subdifferentiation with respect to the variable $y\in Y$, the other variables being fixed.

Fix $z$ in the parameter space. Let us introduce the following continuous function
\begin{equation}
\Phi(y,z)=\max_{x\in X} \g(x,y,z).
\end{equation}
$\Phi(\cdot,z)$ is clearly convex and  continuous. Let us denote by $\bar y$ a minimizer of $\Phi(\cdot,z)$ over $Y$.
Using the sum rule for the subdifferential of convex functions, we obtain
\begin{equation}
\partial \Phi(\bar y,z)+N_Y(\bar y)\ni 0.
\end{equation}
Now from the envelope's theorem (see \cite{Rock70}), we know that
$\partial \Phi(\bar y,z)=\co \{\partial \g(x,\bar y,z):x\in J(y,z)\},$ where $J(y,z):=\{x \mbox{ in }X \mbox{ which maximizes } \g(x,y,z)\mbox{ over }X\}$. Hence Carath\'eodory's theorem implies the
existence of $\mu$ in the simplex of $\R^{q+1}$, $x_1,\ldots,x_{q+1}\in X$ such that
 \begin{equation}\label{opt}
\sum_{i=1}^{q+1}\mu_i\partial \g(x_i,\bar y,z)+N_Y(\bar y)\ni 0.
\end{equation}
where, for each $i$, $x_i$ is a maximizer of $x\rightarrow \g(x,\bar y,z)$ over the compact set $X$. Being given $x$ in $X$, the Dirac measure at $x$ is denoted by $\delta_x$.
We now establish that $\bar x=\sum_{i=1}^{q+1}\mu_i\delta_{x_i}$ and $\bar y$ are optimal strategies in the game
$\Gamma(z)$. Let $x$ be in $X$, we have
\begin{eqnarray}\label{eq:saddle}
\int_X \g(s,\bar y,z) d\bar x(s)& = &\sum_i\mu_i\g(x_i,\bar y,z)\\  \nonumber
                               & = &  \sum_i\mu_i\g(x_1,\bar y,z)\\   \nonumber
                              & = &  \g(x_1,\bar y,z)\\  \nonumber
                              &\geq & \g(x,\bar y,z).
\end{eqnarray}
Using the sum rule for the subdifferential, we see that \eqref{opt} rewrites
$$\partial \left(\sum_i\mu_i \g(x_i,\cdot,z)+I_Y\right)(\bar y)\ni 0,$$
where $I_Y$ denotes the indicator function of $Y$. The above equation implies that $\bar y$ is a minimizer of the convex function
$\sum_i\mu_i \g(x_i,\cdot,z)$ over $Y$. This implies that
\begin{eqnarray*}
\int_X \g(s,\bar y,z) d\bar x(s)& = &\sum_i\mu_i \g(x_i,\bar y,z)\\
                               & \leq &  \sum_i\mu_i \g(x_i, y,z)
\end{eqnarray*}
for all $y$ in $Y$.
Together with \eqref{eq:saddle}, this shows that $(\bar x,\bar y)$ is a saddle point of the mixed extension of $\g$ with value $\int_X \g(s,\bar y,z)d\bar x(s)$. To conclude, we finally observe that we also have  $$\sum_{i=1}^{q+1}\mu_i \g(\bar x_i,\bar y,z)=\g(\bar x_1,\bar y,z) \geq \sum_{i=1}^{q+1}\lambda_i \g(x_i,\bar y,z)$$ for all $\lambda\in \Delta_{q+1}$ and $x_i$ in $X$. Hence $\left((\lambda,x_1,\ldots,x_{q+1}),\bar y\right)$
is a saddle point of the map $\left((\lambda,x_1,\ldots,x_{q+1}),y\right)\rightarrow \sum_{i=1}^{q+1}\lambda_i \g(x_i,y,z)$
with value $\sum\mu_i\g(\bar x_i,\bar y,z)=\int_X \g(s,\bar y,z)d\bar x(s)$. 
\end{proof}

\begin{remark}{\rm
(a) Observe that the above proof actually yields optimal strategies for both players.\\
(b) An analogous result holds, when we assume that $X$ is convex and $X\ni x\rightarrow g(x,y,z)$ is a concave function.}
\end{remark}

\subsection{A semi-algebraic parametric game whose value function is not semi-algebraic}
The following lemma is adapted from an example in McKinsey \cite[Ex.~10.12 p 204]{McKinsey} of a one-shot game played on the square where the payoff is a rational function yet the value is transcendental.

\begin{lemma}\label{lemmecontreexemple}
Consider the semi-algebraic  payoff function $$\g(x,y,z)=\frac{(1+x)(1+yz)}{2(1+xy)^2}$$ where $(x,y,z)$ evolves in $[0,1]\times[0,1]\times(0,1]$. Then $$V(z)=\frac{z}{2\ln(1+z)}, \quad \forall z\in(0,1].$$
\end{lemma}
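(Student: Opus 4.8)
The plan is to compute the value $V(z)$ of the one-shot game on $[0,1]\times[0,1]$ with payoff $\g(x,y,z)=\frac{(1+x)(1+yz)}{2(1+xy)^2}$ directly, by exhibiting optimal strategies for both players and checking that they produce a common value. The natural guess, following McKinsey's example, is that the maximizer (player 1, choosing $x$) uses a non-atomic mixed strategy with a density of the form $c(z)/(1+x)^{2}$ or similar on $[0,1]$, while the minimizer (player 2) plays a pure strategy, or vice versa; one determines the density by imposing that the resulting payoff be constant in the opponent's variable (the standard ``equalizing strategy'' trick for games on the square).

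Concretely, first I would fix $z$ and look for a probability density $\phi_z(x)$ on $[0,1]$ such that $y\mapsto \int_0^1 \g(x,y,z)\,\phi_z(x)\,dx$ is constant on $[0,1]$; the value of that constant is then an upper bound for $V(z)$ if player 1 can guarantee it, and one checks it is also a lower bound by finding a best response of player 2 (a pure $y$, by convexity/monotonicity in $y$ for fixed $x$, or an equalizing strategy on the other side). The integral $\int_0^1 \frac{(1+x)}{(1+xy)^2}\phi_z(x)\,dx$ is a rational integrand; choosing $\phi_z(x)=\frac{k(z)}{1+x}$ with $k(z)=1/\ln 2$ makes $\int_0^1 \phi_z = 1$, and then $\int_0^1 \frac{k(z)}{(1+xy)^2}\,dx = k(z)\cdot\frac{1}{1+y}\cdot\frac{?}{?}$ — actually $\int_0^1 (1+xy)^{-2}dx = \frac{1}{y}\cdot\frac{y}{1+y}= \frac{1}{1+y}$, so the $x$-integral of $\g$ against this density becomes $\frac{k(z)}{2}(1+yz)\cdot\frac{1}{1+y}$, which is \emph{not} constant in $y$. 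So the correct density must be tuned to the full numerator $(1+x)(1+yz)$; I expect the right choice involves balancing the $yz$ term, e.g.\ a density proportional to $\frac{1+x}{(1+x)^{?}}$ or a two-parameter family, and the constant $\frac{z}{2\ln(1+z)}$ strongly suggests a substitution $u=1+x$ with an integrand producing $\ln(1+z)$ after integrating over $x\in[0,1]$ against something like $dx/(1+xz)$ or a change of variables mixing $x$ and $z$.

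The cleanest route is probably: guess that player 2's optimal strategy is a density $\psi_z(y)$ on $[0,1]$ chosen so that $x\mapsto \int_0^1 \g(x,y,z)\psi_z(y)\,dy$ is constant; compute $\int_0^1 \frac{1+yz}{(1+xy)^2}\psi_z(y)\,dy$ and solve for $\psi_z$ making this independent of $x$. Writing $\frac{1+yz}{(1+xy)^2}$ and integrating by parts or using partial fractions in $y$, one is led to $\psi_z(y)=\frac{C(z)}{(1+z y)}$ or similar, with $C(z)$ the normalizing constant $z/\ln(1+z)$ (note $\int_0^1 \frac{dy}{1+zy}=\frac{\ln(1+z)}{z}$, which is exactly where the logarithm enters). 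Then symmetrically one checks player 1 has an equalizing strategy, or that every pure response of player 1 yields the same payoff, giving $V(z)=\frac{z}{2\ln(1+z)}$.

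The main obstacle will be pinning down the exact pair of equalizing densities: the payoff is not symmetric in $x$ and $y$ (the factor $(1+x)$ versus $(1+yz)$ and the square $(1+xy)^2$ in the denominator), so the two optimal strategies will have different forms, and one must verify both the equalizing property \emph{and} the saddle inequalities (i.e.\ that neither player can profitably deviate), which requires checking a sign/monotonicity condition on the residual functions. Once the two one-variable integrals $\int_0^1 \frac{dx}{1+x}=\ln 2$ and $\int_0^1\frac{dy}{1+zy}=\frac{\ln(1+z)}{z}$ are in hand, the arithmetic collapses to the stated formula; I would present the two candidate strategies, verify equalization by the elementary integral computations, and conclude by Sion's theorem that the common guaranteed payoff is the value.
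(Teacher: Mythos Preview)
Your approach is exactly the paper's: exhibit an equalizing mixed strategy for each player and read off the common constant. You even correctly identify Player~2's optimal density, $\psi_z(y)=\dfrac{z}{\ln(1+z)}\cdot\dfrac{1}{1+yz}$ on $[0,1]$; with it one computes, using $\displaystyle\int_0^1\frac{dy}{(1+xy)^2}=\frac{1}{1+x}$, that
\[
\int_0^1 \g(x,y,z)\,\psi_z(y)\,dy=\frac{z(1+x)}{2\ln(1+z)}\cdot\frac{1}{1+x}=\frac{z}{2\ln(1+z)}\quad\text{for every }x\in[0,1].
\]

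The gap is on Player~1's side. Your first guess, $\phi_z(x)\propto 1/(1+x)$, is the right \emph{shape}; what fails is the \emph{support}. Player~1's optimal strategy is the density $\dfrac{1}{\ln(1+z)}\cdot\dfrac{1}{1+x}$ on the interval $[0,z]$, not on $[0,1]$. This is where the normalization $\int_0^z\frac{dx}{1+x}=\ln(1+z)$ comes from (the source of the logarithm you were hunting for). With this support one gets $\displaystyle\int_0^z\frac{dx}{(1+xy)^2}=\frac{z}{1+yz}$, and hence
\[
\int_0^z \g(x,y,z)\,\phi_z(x)\,dx=\frac{1+yz}{2\ln(1+z)}\cdot\frac{z}{1+yz}=\frac{z}{2\ln(1+z)}\quad\text{for every }y\in[0,1].
\]
Since both strategies equalize at the same constant, that constant is the value; no additional sign or monotonicity check is needed.
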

\begin{proof}{Proof.}
Fix $z$ in $(0,1]$.
Player 1 can guarantee $V(z)$ by playing the probability density
\[ \frac{dx}{\ln(1+z)(1+x)}
\]
on $[0,z]$ since for any $y\in[0,1]$,
\[
\int_0^z \frac{\g(x,y,z) dx}{\ln(1+z)(1+x)} = \frac{1+yz}{2\ln(1+z)} \int_0^z \frac{dx}{(1+xy)^2} =\frac{z}{2\ln(1+z)}
\]

On the other hand, Player 2 can guarantee $V(z)$ by playing the probability density
\[\frac{z\, dy}{\ln(1+z)(1+yz)}
\]
on $[0,1]$ since for any $x\in[0,1]$,
\[
\int_0^1 \frac{z\, \g(x,y,z) dy}{\ln(1+z)(1+yz)} = \frac{z(1+x)}{2\ln(1+z)} \int_0^1 \frac{dy}{(1+xy)^2} =\frac{z}{2\ln(1+z)}.
\qquad
\]
\end{proof}

We see on this example that the underlying objects of the initial game are semi-algebraic while the value function is not. Observe however that the value function is definable in a {\em larger structure} since it is globally subanalytic (the $\log$ function only appears through its restriction on compact sets). The question of the possible definability of the value function in a larger structure is exciting but it seems difficult, it is certainly a matter for future research.

\section{Values of stochastic games}\label{sectionvaluesto}
\subsection{Definable stochastic games}\label{propperfect}
We start by a simple result. Recall that a stochastic game has perfect information if each state is controlled by only one of the players (see Section~\ref{Defsto}).

\begin{proposition}[Definable games with perfect information]\label{p:perfect} Definable games with perfect information and bounded payoff {\rm (\footnote{Recall that we do not need to assume continuity of $g$ and $\rho$ in that case, as stated in Remark \ref{remarqueperfectinfo}})} have a uniform value.
\end{proposition}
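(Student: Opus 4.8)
The plan is to reduce the claim to Corollary~\ref{c:shapdef}(iii) by showing that the Shapley operator of a definable game with perfect information is itself definable in the same o-minimal structure $\mathcal O$. Once this definability is established, Corollary~\ref{c:shapdef}(iii) (which is the Mertens--Neyman argument applied to a definable, hence bounded-variation, curve $\lambda\mapsto v_\lambda$) immediately gives the existence of a uniform value, and there is nothing more to do. So the entire content of the proof is the definability of $\Psi$.

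First I would recall the explicit form of the Shapley operator in the perfect-information case, as displayed in Section~\ref{Defsto}: for a state $\omega_k$ controlled by Player~1,
\[
\Psi_k(f_1,\ldots,f_d)=\max_{x\in X}\Bigl[g(x,\omega_k)+\sum_{i=1}^d\rho(\omega_i\mid x,\omega_k)f_i\Bigr],
\]
and symmetrically with $\min_{y\in Y}$ for a state controlled by Player~2. The key observation is that here the optimization is over the (definable, compact) action set $X$ (or $Y$) directly, \emph{not} over the simplex of mixed strategies $\Delta(X)$; there is no integration against a probability measure, so we never leave the world of finite-dimensional definable objects. The function
\[
(x,f_1,\ldots,f_d)\longmapsto g(x,\omega_k)+\sum_{i=1}^d\rho(\omega_i\mid x,\omega_k)f_i
\]
is definable on $X\times\R^d$ because $g$ and $\rho$ are definable by hypothesis (this is exactly the definability assumption on the game), sums and products of definable functions are definable, and $X$ is definable. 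Then I would invoke Example~\ref{ex} (definability of $\inf$/$\sup$ of a definable function over a definable set, via first-order formulas and Proposition~\ref{p:FO1}): the pointwise maximum over $x\in X$ of a definable function is definable, and likewise the minimum over $y\in Y$. Hence each component $\Psi_k$ is definable, so $\Psi:\R^d\to\R^d$ is definable in $\mathcal O$.

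Having $\Psi$ definable, I would then simply apply Corollary~\ref{c:shapdef}(iii) to conclude that the game has a uniform value. I should also note, following Remark~\ref{remarqueperfectinfo}, that in the perfect-information setting the values $v_n$ and $v_\lambda$ exist under the sole assumption that the payoff is bounded, without continuity of $g$ or $\rho$; this is why the proposition is stated with "bounded payoff" rather than under assumption $(\mathcal A)$, and it is the reason for the footnote. The only mild subtlety — and the nearest thing to an obstacle — is purely bookkeeping: one must make sure the max is taken over $X$ and the min over $Y$ exactly as in the perfect-information formula, since it is precisely the \emph{absence} of the mixed-strategy integral (which is what forces the separability hypothesis in the general case, cf. Proposition~\ref{p:sep} and the counterexample of Lemma~\ref{lemmecontreexemple}) that makes the definability of $\Psi$ automatic here. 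No polynomially bounded hypothesis is needed, so we only get (iii), not a convergence rate.
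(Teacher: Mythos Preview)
Your proposal is correct and follows essentially the same approach as the paper: observe that in the perfect-information case each coordinate $\Psi_k$ is a supremum (or infimum) over the definable action set of a definable function, invoke Example~\ref{ex} to conclude definability of $\Psi$, and then apply Corollary~\ref{c:shapdef}. The paper's proof is simply a terser version of what you wrote.
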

\begin{proof}{Proof.}
 Let $\omega_k$ be any state controlled by the first player. The Shapley operator in this state can be written as
\[
\Psi_k(f)=\sup_{X} \left[  g(x,\omega_k)+ \sum_{i=1}^d \rho(\omega_i|x,\omega_k)f_i\right].
\]
So $\Psi_k$ is the supremum, taken on a definable set, of definable functions, and is thus definable (see Example~\ref{ex}). The same is true if $\omega_k$ is controlled by the second player, so we conclude by Corollary~\ref{c:shapdef}.~
\end{proof}

A stochastic game $(\Omega,X,Y,g,\rho)$ is called {\em separably definable}, if both the payoff and the transition functions are separably definable. More precisely:
\begin{enumerate}
\item[(a)] $\Omega$ is finite and $X\subset\R^p$, $Y\subset\R^q$ are definable sets.
\item[(b)] For each state $\omega$, the reward function $g(\cdot,\cdot,\omega)$ has a definable/separable structure, that is

$$g(x,y,\omega):=\sum_{i=1}^{I_{\omega}}\sum_{j=1}^{J_{\omega}}m_{i,j}^{\omega}\,a_i(x,\omega)\,b_j(y,\omega),\;\forall (x,y)\in X\times Y,$$
where  ${I_{\omega}},{J_{\omega}}$ are positive integers, $m_{ij}^{\omega}$ are real numbers, $a_i(\cdot,\omega)$ and $b_j(\cdot,\omega)$ are continuous definable functions.
\item[(c)] For each couple of states $\omega,\omega'$, the transition function $\rho(\omega'|\cdot,\cdot,\omega)$ has a  definable/separable structure, that is
$$\rho(\omega'|x,y,\omega):=\sum_{i=1}^{K_{(\omega,\omega')}}\sum_{j=1}^{L_{(\omega,\omega')}} n_{i,j}^{(\omega,\omega')}\;c_i(x,\omega,\omega')\,d_j(y,\omega,\omega')\;\forall (x,y)\in X\times Y,$$
where $K_{(\omega,\omega')},L_{(\omega,\omega')}$ are positive integers,  $n_{ij}^{(\omega,\omega')}$ are  real numbers, $c_i(\cdot,\omega,\omega')$ and $d_j(\cdot,\omega,\omega')$ are continuous definable functions.
\end{enumerate}
The most natural example of separably definable games are games with semi-algebraic action spaces and polynomial reward and transition functions.

\begin{theorem}[Separably definable games]\label{t:sepgamesunif}
Separably definable games have a uniform value.
\end{theorem}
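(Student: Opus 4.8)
The plan is to reduce Theorem~\ref{t:sepgamesunif} to Corollary~\ref{c:shapdef}(iii) by showing that the Shapley operator of a separably definable game is definable in the same o-minimal structure $\mathcal O$. Once this definability is established, the existence of a uniform value follows immediately, since the Shapley operator of a stochastic game is nonexpansive for the supremum norm and all hypotheses of Corollary~\ref{c:shapdef} are met.

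The key observation is that for a separably definable game the $k$-th component of the Shapley operator, evaluated at $f=(f_1,\ldots,f_d)\in\R^d$, is
$$\Psi_k(f)=\max_{\mu\in\Delta(X)}\min_{\nu\in\Delta(Y)}\int_X\int_Y\Big[g(x,y,\omega_k)+\sum_{\ell=1}^d\rho(\omega_\ell|x,y,\omega_k)f_\ell\Big]\,d\mu(x)\,d\nu(y),$$
and the bracketed integrand is itself, for fixed $f$, a separable function of $(x,y)$: indeed $g(x,y,\omega_k)=\sum_{i,j}m_{ij}^{\omega_k}a_i(x,\omega_k)b_j(y,\omega_k)$ is separable by assumption (b), and $\sum_\ell\rho(\omega_\ell|x,y,\omega_k)f_\ell=\sum_\ell f_\ell\sum_{i,j}n_{ij}^{(\omega_k,\omega_\ell)}c_i(x,\omega_k,\omega_\ell)d_j(y,\omega_k,\omega_\ell)$ is a sum of $d$ separable functions by assumption (c), hence separable after collecting all the $a_i,c_i$ into a single list of ``$x$-factors'' and all the $b_j,d_j$ into a single list of ``$y$-factors''. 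The coefficients of this separable decomposition depend \emph{linearly} — hence definably — on the parameter $f$. Therefore, regarding $f\in\R^d$ as the parameter $z$, the map $(x,y,f)\mapsto g(x,y,\omega_k)+\sum_\ell\rho(\omega_\ell|x,y,\omega_k)f_\ell$ is separably definable in the sense of the Definition preceding Proposition~\ref{p:sep}, with parameter space $Z=\R^d$.

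Thus $\Psi_k$ is exactly the value function $V(\cdot)$ of the separably definable parametric game $(X,Y,\R^d,\g_k)$ where $\g_k(x,y,f):=g(x,y,\omega_k)+\sum_{\ell=1}^d\rho(\omega_\ell|x,y,\omega_k)f_\ell$. By Proposition~\ref{p:sep}, $V=\Psi_k$ is definable in $\mathcal O$. Doing this for each $k\in\{1,\ldots,d\}$ shows that every component of $\Psi$ is definable, hence $\Psi:\R^d\to\R^d$ is definable (its graph is the intersection over $k$ of the definable sets $\{(f,g):g_k=\Psi_k(f)\}$). Applying Corollary~\ref{c:shapdef}(iii) — equivalently, Theorem~\ref{conv}(i) together with the Mertens--Neyman argument — yields that the game has a uniform value.

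I do not expect a serious obstacle here: the entire content of the theorem has been front-loaded into Proposition~\ref{p:sep}. The only point requiring a little care is the bookkeeping that turns ``$g$ separable'' plus ``each $\rho(\omega_\ell|\cdot)$ separable'' into ``the bracketed integrand is separable with parameter-dependent coefficients'': one must check that the index sets can be merged and that the $m_{ij}$-type coefficients become affine functions of $f$ (the constant term coming from $g$, the linear part from the transitions), so that they are definable as required by the hypothesis of Proposition~\ref{p:sep}. A secondary small remark is that the standing assumptions $(\mathcal A)$ are in force (continuous $g,\rho$, compact $X,Y$), so $v_n$ and $v_\lambda$ are well defined and Shapley's theorem applies; these are exactly the ingredients Corollary~\ref{c:shapdef} needs.
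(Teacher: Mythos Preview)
Your proof is correct and follows exactly the same route as the paper: recognize each coordinate $\Psi_k$ as the value of a separably definable parametric game (with parameter $f\in\R^d$), invoke Proposition~\ref{p:sep} to get definability of $\Psi$, and conclude via Corollary~\ref{c:shapdef}(iii). You have in fact supplied more detail than the paper's three-line proof, spelling out the bookkeeping that merges the separable decompositions of $g$ and of the $\rho(\omega_\ell|\cdot)$ into a single separable expression with coefficients depending affinely (hence definably) on $f$; this is precisely the point the paper leaves implicit when it says ``the coordinate functions of the Shapley operator yield $d$ parametric separable definable games.''
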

\begin{proof}{ Proof.}
The coordinate functions of the Shapley operator yield $d$ parametric separable definable games. Hence the Shapley operator of the game, say $\Psi$, is itself  definable by Proposition~\ref{p:sep}.  Applying Corollary~\ref{c:shapdef} to $\Psi$, the result follows. 
\end{proof}

\smallskip

An important subclass of separable definable games is the class of definable games for which {\em one of the player} has a finite set of strategies.
\begin{corollary}[Definable games finite on one-side]
Consider a definable stochastic game and assume that one of the player has a finite set of strategies. Then the game has a uniform value.
\end{corollary}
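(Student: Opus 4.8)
The strategy is to reduce the claim to the already-established Theorem~\ref{t:sepgamesunif} on separably definable games, by showing that a definable game in which (say) player~2 has a finite action set $Y=\{y_1,\ldots,y_J\}$ is automatically separably definable. The point is that finiteness of one action set trivializes the product structure in that variable: any function of $y\in Y$ can be written as a linear combination of indicator-type ``selection'' functions $b_j(y)=\mathbf{1}_{\{y=y_j\}}$, so separability in the pair $(x,y)$ becomes essentially vacuous on the finite side.

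First I would fix notation: let $Y=\{y_1,\ldots,y_J\}$ be the finite action set of player~2 and $X\subset\R^p$ the (definable, compact) action set of player~1. For each state $\omega$ and each $j\in\{1,\ldots,J\}$, set $a_j^{\omega}(x):=g(x,y_j,\omega)$; these are continuous definable functions of $x$ by hypothesis. Define $b_j:Y\to\R$ by $b_j(y_\ell)=\delta_{j\ell}$ — a function on a finite (hence definable) set, thus trivially definable — and constants $m_{i j}^{\omega}=\delta_{ij}$. Then $g(x,y,\omega)=\sum_{i=1}^{J}\sum_{j=1}^{J} m_{ij}^{\omega}\,a_i^{\omega}(x)\,b_j(y)$ for all $(x,y)\in X\times Y$, which is exactly the separably definable form required in item~(b) of the definition preceding Theorem~\ref{t:sepgamesunif}. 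Applying the same construction to each transition function $\rho(\omega'|\cdot,\cdot,\omega)$ — writing $c_i^{(\omega,\omega')}(x):=\rho(\omega'|x,y_i,\omega)$, keeping the same $d_j=b_j$ and $n_{ij}^{(\omega,\omega')}=\delta_{ij}$ — yields item~(c). Item~(a) holds since $\Omega$ is finite and $X$, $Y$ are definable. Hence the game is separably definable, and Theorem~\ref{t:sepgamesunif} gives it a uniform value.

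The symmetric case, in which it is player~1 who has finitely many actions, is handled identically (or by the standard reduction exchanging the roles of the players and replacing $g$ by $-g$, using that the $\max$ and $\min$ commute in the Shapley operator). So there is no real obstacle here; the only thing to be careful about is the bookkeeping of which continuous-definability hypotheses on $g$ and $\rho$ are actually needed. The original game is definable by assumption, so each $g(\cdot,y_j,\omega)$ and $\rho(\omega'|\cdot,y_j,\omega)$ is a definable function of $x$; continuity on the $x$-side is inherited from the standing assumption~($\mathcal A$) on the full game (restriction of a continuous function to $X\times\{y_j\}\times\{\omega\}$), and on the finite $y$-side continuity is automatic. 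Thus all hypotheses of Theorem~\ref{t:sepgamesunif} are met, completing the argument. The mildly delicate point — really the ``main obstacle'' if one insists on finding one — is simply recognizing that the separability definition was set up precisely so that a one-sided finite action space costs nothing, making this corollary essentially immediate once Theorem~\ref{t:sepgamesunif} is in hand.
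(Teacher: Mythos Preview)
Your proof is correct and follows essentially the same route as the paper: reduce to Theorem~\ref{t:sepgamesunif} by exhibiting the separable structure that a finite action set on one side automatically provides. The only cosmetic difference is that the paper phrases this as passing to the \emph{mixed extension} (replacing the finite set $Y$ by the simplex $\Delta(Y)$, with $b_j(\nu)=\nu_j$ rather than your indicator functions $b_j(y)=\mathbf{1}_{\{y=y_j\}}$), and it also notes as an alternative that the mixed extension satisfies the convexity hypothesis of Proposition~\ref{convexparametric}.
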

\begin{proof}{Proof.}
It suffices to observe that the mixed extension of the game is both separable and definable, and to apply the previous theorem.\\ One could alternatively observe that the mixed extension fulfills the convexity assumptions of Proposition~\ref{convexparametric}. This shows that the Shapley operator of the game is definable, hence Corollary~\ref{c:shapdef} applies and yields the result. 
\end{proof}

\smallskip

The above theorems  generalize in particular the results of Bewley-Kohlberg \cite{BewlKohl76}, Mertens-Neyman \cite{MertNeym81} on {\em finite }stochastic games.\\

As shown by the following result, it is not true in general that semi-algebraic stochastic games have a semi-algebraic Shapley operator.
\begin{example}\label{exshap}
Consider the following stochastic game with two states $\{\omega_1,\omega_2\}$ and action sets $[0,1]$ for each player. The first state is absorbing with payoff $0$, while for the second state, the payoff is $$g(x,y,\omega_2)=\frac{1+x}{2(1+xy)^2}$$ and the transition probability is given by $$1-\rho(\omega_1|x,y,\omega_2)=\rho(\omega_2|x,y,\omega_2)=\frac{(1+x)y}{2(1+xy)^2},$$
for all $(x,y)$ in $[0,1]^2$.

 This stochastic game is defined by semi-algebraic and continuous functions but neither the Shapley operator $\Psi$ nor the curve of values  $(v_\lambda)_{\lambda\in(0,1]}$ are semi-algebraic mappings.
\end{example}
\begin{proof}{Proof.}
Notice first that $\rho(\omega_2|x,y,\omega_2)\in [0,1]$ for all $x$ and $y$ so the game is well defined.
It is straightforward that $\Psi_1(f_1,f_2)=f_1$, and $\Psi_2(f_1,f_2)=f_1+V(f_2-f_1)$ (where $V$ is the value of the parametric game in Lemma \ref{lemmecontreexemple})  hence $\Psi$ is not semi algebraic.

For any $\lambda\in]0,1[$ let $u_\lambda=\left(0,\frac{\lambda(e^\frac{1-\lambda}{2}-1)}{1-\lambda}\right)$, the identity $u_\lambda=v_\lambda$ will follow as we prove that $u_{\lambda}=\lambda\Psi(\frac{1-\lambda}{\lambda}\,u_{\lambda})$. This is clear for the first coordinate, and for the second, since $\frac{1-\lambda}{\lambda}\,u_{\lambda}=e^\frac{1-\lambda}{2}-1\in]0,1[$, Lemma \ref{lemmecontreexemple} implies that
\begin{eqnarray*}
\lambda \Psi_2(\frac{1-\lambda}{\lambda}\,u_{\lambda})&=&\lambda V(e^\frac{1-\lambda}{2}-1)\\
&=&\lambda \frac{e^\frac{1-\lambda}{2}-1}{1-\lambda}\\
&=&u_\lambda. \qquad \qquad  \end{eqnarray*}

\end{proof}

\begin{remark}{\rm As in Lemma~\ref{lemmecontreexemple}, one observes that both the Shapley operator $\Psi$ and the curve of values  $(v_\lambda)_{\lambda\in(0,1]}$ are globally subanalytic. }
\end{remark}

\subsection{Stochastic games with separable definable transitions}

This section establishes, by means of the Weierstrass density Theorem, that the assumptions we made on payoff functions can be brought down to mere continuity without altering our results on uniform values. From a conceptual viewpoint this shows that the essential role played by definability in our framework is to  tame oscillations generated by the underlying stochastic process $\rho$.

\begin{theorem}[Games with separable definable transitions]\label{t:value}
Let $(\Omega, X,Y,g,\rho)$ be a stochastic game, and assume that:
\begin{enumerate}
\item[(i)] $\Omega$ is finite and $X, Y$ are definable,
\item[(ii)] the reward function $g$ is an arbitrary continuous function,
\item[(iii)] the transition  function $\rho$ is definable and separable (e.g.\ polynomial).
\end{enumerate}
Then the game $(\Omega, X,Y,g,\rho)$ has a uniform value.
\end{theorem}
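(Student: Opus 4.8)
The plan is to approximate the continuous reward function $g$ uniformly by separably definable reward functions, invoke Theorem~\ref{t:sepgamesunif} on the approximating games, and then pass to the limit using the fact that a uniform limit of games with uniform value has a uniform value. First I would fix $\epsilon>0$ and use the Stone--Weierstrass theorem on the compact set $X\times Y$ (for each of the finitely many states $\omega$) to produce a polynomial $g_\epsilon(\cdot,\cdot,\omega)$ with $\|g_\epsilon(\cdot,\cdot,\omega)-g(\cdot,\cdot,\omega)\|_\infty\leq\epsilon$; since $\Omega$ is finite this gives a reward function $g_\epsilon$ on $\Omega\times X\times Y$ that is polynomial, hence separable, in $(x,y)$, and the coefficients are constants, hence definable in $\mathcal O$. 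The transition $\rho$ is left unchanged, so the game $(\Omega,X,Y,g_\epsilon,\rho)$ satisfies hypotheses (a)--(c) of the definition of a separably definable game (with $X,Y$ definable by (i) and $\rho$ separably definable by (iii)), and Theorem~\ref{t:sepgamesunif} yields a uniform value $v_\infty^\epsilon$ for it.

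Next I would quantify how the data of the game depend on $g$. Replacing $g$ by $g_\epsilon$ changes each stage payoff $g_i$ by at most $\epsilon$ in absolute value, uniformly over all histories, strategies and starting states; hence for every $n$, every pair of strategies $(\sigma,\tau)$ and every $\omega_1$ one has $|\gamma_n(\sigma,\tau,\omega_1)-\gamma_n^\epsilon(\sigma,\tau,\omega_1)|\leq\epsilon$, where $\gamma_n^\epsilon$ is the $n$-stage payoff of the $\epsilon$-game. This immediately gives $\|v_n-v_n^\epsilon\|_\infty\leq\epsilon$ for all $n$, and passing to the limit in $n$ (using that $v_n^\epsilon\to v_\infty^\epsilon$ by Theorem~\ref{t:sepgamesunif}) shows $\|v_\infty^{\epsilon}-v_\infty^{\epsilon'}\|_\infty\leq 2(\epsilon+\epsilon')$, so the family $(v_\infty^\epsilon)$ is Cauchy as $\epsilon\to 0$; call its limit $v_\infty$.

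Finally I would check that $v_\infty$ is the uniform value of the original game. Fix $\delta>0$ and pick $\epsilon\leq\delta/4$ with $\|v_\infty^\epsilon-v_\infty\|_\infty\leq\delta/4$. Since $(\Omega,X,Y,g_\epsilon,\rho)$ has uniform value $v_\infty^\epsilon$, there are strategies $(\sigma,\tau)$ and a time $N$ such that for all $n\geq N$, all $\omega_1$ and all $\sigma',\tau'$ one has $\gamma_n^\epsilon(\sigma,\tau',\omega_1)\geq v_\infty^\epsilon(\omega_1)-\delta/4$ and $\gamma_n^\epsilon(\sigma',\tau,\omega_1)\leq v_\infty^\epsilon(\omega_1)+\delta/4$. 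Combining with $|\gamma_n-\gamma_n^\epsilon|\leq\epsilon\leq\delta/4$ and $\|v_\infty^\epsilon-v_\infty\|_\infty\leq\delta/4$ gives $\gamma_n(\sigma,\tau',\omega_1)\geq v_\infty(\omega_1)-\delta$ and $\gamma_n(\sigma',\tau,\omega_1)\leq v_\infty(\omega_1)+\delta$ for all $n\geq N$, which is exactly the definition of a uniform value. The main obstacle, and the point requiring the most care, is the very first step: one must be sure that Stone--Weierstrass produces approximants that are genuinely \emph{separable and definable in the given structure $\mathcal O$} and not merely continuous — polynomials do the job since they are semi-algebraic and every o-minimal structure contains the semi-algebraic sets — and one must verify that the approximation error propagates additively and uniformly through the Cesàro evaluation $\gamma_n$, which it does precisely because the error bound $\epsilon$ is uniform in $(x,y,\omega)$ and the $n$-stage payoff is an average of $n$ stage payoffs.
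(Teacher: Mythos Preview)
Your proof is correct and follows essentially the same approach as the paper: approximate $g$ by polynomials via Stone--Weierstrass, apply Theorem~\ref{t:sepgamesunif} to the resulting separably definable games, and transfer the near-optimal strategies to the original game using the uniform bound $|\gamma_n-\gamma_n^\epsilon|\leq\epsilon$ (which holds because the transition $\rho$, hence the induced law on histories, is unchanged). The only cosmetic difference is that the paper first deduces convergence of $v_n$ via a sandwich $|v_n-v^k|\leq 2/k$ and then checks the uniform-value inequalities, whereas you first show $(v_\infty^\epsilon)$ is Cauchy and then verify the inequalities; both routes are equivalent.
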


As it appears below, the proof of the above theorem relies on  Mertens-Neyman uniform value theorem \cite{MertNeym81} that we do not reproduce here. We shall however provide a complete proof of a weaker result in the spirit of the ``asymptotic approach" of Rosenberg-Sorin:
\begin{theorem}[Games with separable definable transitions -- weak version]\label{t:valuebis}
We consider a stochastic game $(\Omega, X,Y,g,\rho)$ which is as in Theorem~\ref{t:value}.\\
Then  the following limits exist and coincide:
$$\lim_{n\rightarrow=\infty}v_n=\lim_{\lambda \rightarrow 0}v_{\lambda}.$$
\end{theorem}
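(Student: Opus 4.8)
The plan is to approximate the continuous reward $g$ uniformly by separably definable rewards and to transfer the convergence of values along this approximation. First I would fix $\epsilon>0$ and, for each state $\omega$, apply the Stone--Weierstrass theorem on the compact set $X\times Y$: the algebra generated by functions of the form $a(x)b(y)$ with $a,b$ polynomial separates points, so there is a polynomial-in-separated-form (hence semi-algebraic, hence definable in $\mathcal O$) function $g^\epsilon(\cdot,\cdot,\omega)$ with $\|g^\epsilon(\cdot,\cdot,\omega)-g(\cdot,\cdot,\omega)\|_\infty\leq\epsilon$ for every $\omega$. Since $\rho$ is already separable and definable, the game $(\Omega,X,Y,g^\epsilon,\rho)$ is separably definable, so Theorem~\ref{t:sepgamesunif} (or directly Corollary~\ref{c:shapdef}(i) applied to its Shapley operator) gives that its $n$-stage and $\lambda$-discounted values, call them $v_n^\epsilon$ and $v_\lambda^\epsilon$, both converge as $n\to\infty$ and $\lambda\to 0$ to a common limit $v_\infty^\epsilon$.

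Next I would quantify the effect of perturbing the reward by $\epsilon$ on the values, uniformly in $n$ and $\lambda$. This is the standard robustness estimate: replacing $g$ by $g^\epsilon$ changes each stage payoff by at most $\epsilon$ in absolute value, hence changes $\gamma_n$ by at most $\epsilon$ for every pair of strategies and every initial state, and similarly for $\gamma_\lambda$ (since the discount weights sum to one). Passing to the value via Sion's theorem, one gets $\|v_n-v_n^\epsilon\|_\infty\leq\epsilon$ and $\|v_\lambda-v_\lambda^\epsilon\|_\infty\leq\epsilon$ for all $n$ and all $\lambda\in(0,1]$. Equivalently, at the level of Shapley operators, $\|\Psi-\Psi^\epsilon\|_\infty\leq\epsilon$ in sup norm, and since both are nonexpansive one propagates this to $\|\Psi^n(0)-(\Psi^\epsilon)^n(0)\|_\infty\leq n\epsilon$, giving the same bound after dividing by $n$; the discounted estimate follows from the contraction fixed-point characterization \eqref{fixed_point}.

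Then I would run the usual ``three-epsilons'' argument to conclude existence and coincidence of $\lim_n v_n$ and $\lim_\lambda v_\lambda$. For fixed $\epsilon$: for $n,n'$ large, $\|v_n-v_{n'}\|_\infty\leq \|v_n-v_n^\epsilon\|_\infty+\|v_n^\epsilon-v_{n'}^\epsilon\|_\infty+\|v_{n'}^\epsilon-v_{n'}\|_\infty\leq 2\epsilon + \|v_n^\epsilon-v_{n'}^\epsilon\|_\infty$, and the middle term tends to $0$ because $v_n^\epsilon$ converges; so $(v_n)$ is Cauchy and converges to some $v_\infty$. The same computation with $v_\lambda^\epsilon$ shows $(v_\lambda)_{\lambda\to 0}$ converges, and comparing both limits through $v_\infty^\epsilon$ (which is within $\epsilon$ of each) and letting $\epsilon\to 0$ shows the two limits coincide. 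This proves Theorem~\ref{t:valuebis}; for the stronger Theorem~\ref{t:value} one would instead feed the bounded-variation/Mertens--Neyman machinery the perturbed curves $v_\lambda^\epsilon$ together with the uniform bound $\|v_\lambda-v_\lambda^\epsilon\|\leq\epsilon$, which is precisely the route the paper defers.

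The main obstacle is making the Stone--Weierstrass step land inside the \emph{separable} class rather than merely inside $C(X\times Y)$: one must check that the approximating functions can be taken of the exact form $\sum_{i,j}m_{ij}a_i(x)b_j(y)$ with $a_i,b_j$ definable, which is immediate since the subalgebra of finite sums of products $a(x)b(y)$ with $a,b$ polynomials is a point-separating subalgebra of $C(X\times Y)$ containing constants, so its closure is everything. A secondary subtlety is that the approximation must be done state by state but uniformly in the (finitely many) states, which costs nothing since $\Omega$ is finite; and one should make sure the perturbation bound on $\gamma_n,\gamma_\lambda$ is genuinely strategy-independent, which it is because it is a pointwise bound on the integrand.
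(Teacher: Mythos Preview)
Your proposal is correct and follows essentially the same route as the paper: Stone--Weierstrass approximation of $g$ by polynomial (hence separably definable) rewards, definability of the approximating Shapley operators, and then a uniform-convergence/three-epsilon argument to transfer the limits. The only cosmetic difference is that the paper packages your three-epsilon step into two abstract lemmas (Propositions~\ref{t:unif} and~\ref{t:uniflambda}) about uniformly convergent sequences of nonexpansive maps, whereas you carry it out directly at the level of the values.
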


Before establishing the above results, we need some abstract results that allow to deal with certain approximation of  stochastic games. In the following proposition, the space $({\mathcal X},\|\cdot\|)$ denotes a real Banach space and $\K$ denotes a nonempty closed cone of ${\mathcal X}$. Being given two mappings $\Phi_1,\Phi_2:\K\rightarrow\K$, we define their
 supremum ``norm" through
 $$\| \Phi_1-\Phi_2 \|_{\infty}=\sup\left\{\|\Phi_1(f)-\Phi_2(f)\|: f\in \K\right\}.$$
 Observe that the above value may be $+\infty$, so that $\|\cdot\|_\infty$ is not a  norm, however, $\delta(\Phi_1,\Phi_2):= \|\Phi_1 -\Phi_2\|_{\infty}
/ (1+ \|\Phi_1 -\Phi_2\|_{\infty})$ does provide a proper metric (\footnote{We of course set:   $\delta(\Phi_1,\Phi_2):=1$ whenever $\| \Phi_1-\Phi_2 \|_{\infty}=\infty$.}) on the space of mappings $\K \to \K$.
We say that a sequence $\Psi_k:\K \rightarrow \K$ ($k\in\N$)  converges uniformly to $\Psi:\K\rightarrow\K$ if $\| \Psi_k-\Psi \|_{\infty}$ tends to zero as $k$ goes to infinity, or equivalently, if it converges
to $\Psi$ with respect to the metric $\delta$.
The observation that the set of nonexpansive mappings $\Psi:\K\to \K$
such that the limit $\lim_{n\to \infty}\Psi^n(0)/n$ does exist is closed in the
topology of uniform convergence was made in~\cite{gg04}.

 \begin{proposition}\label{t:unif} Let  $\Psi_k:\K \rightarrow \K$ be a sequence of nonexpansive mappings. Assume that\\
 (i) There exists $\Psi:\K \rightarrow \K$ such that $\Psi_k$ converges uniformly to $\Psi$ as $k\rightarrow+\infty$,\\
  (ii) for each fixed integer $k$, the sequence $\frac{1}{n}\Psi^n_k(0)$ has a limit $v^k$ in $\K$ as $n\rightarrow+\infty$.

 Then the sequence $v^k$ has a limit $v$ in $\K$, $\Psi$ is nonexpansive and $\frac{1}{n}\Psi^n(0)$ converges to $v$ as $k$ goes to infinity.
 \end{proposition}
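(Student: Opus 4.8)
The plan is to establish three things in turn: that $\Psi$ is nonexpansive, that the limit $v := \lim_k v^k$ exists in $\K$, and that $\frac1n\Psi^n(0) \to v$. First I would note that nonexpansiveness of $\Psi$ is immediate from the uniform convergence: for $f,g \in \K$ we have $\|\Psi(f)-\Psi(g)\| \le \|\Psi(f)-\Psi_k(f)\| + \|\Psi_k(f)-\Psi_k(g)\| + \|\Psi_k(g)-\Psi(g)\| \le 2\|\Psi_k - \Psi\|_\infty + \|f-g\|$, and letting $k\to\infty$ gives the result. (Here $\K$ being a cone is what lets us take $f=g=0$ later; note also that since $0 \in \K$ the iterates $\Psi^n(0)$ are well defined.)

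Next, the main quantitative point: I would prove that $(v^k)_k$ is Cauchy, and more precisely that $\|v^k - v^\ell\| \le \|\Psi_k - \Psi_\ell\|_\infty$. The key observation is that if $\Phi_1, \Phi_2 : \K \to \K$ are nonexpansive with $\|\Phi_1 - \Phi_2\|_\infty =: \eta < \infty$, then $\|\Phi_1^n(0) - \Phi_2^n(0)\| \le n\eta$ by a telescoping argument: $\|\Phi_1^n(0) - \Phi_2^n(0)\| \le \|\Phi_1(\Phi_1^{n-1}(0)) - \Phi_1(\Phi_2^{n-1}(0))\| + \|\Phi_1(\Phi_2^{n-1}(0)) - \Phi_2(\Phi_2^{n-1}(0))\| \le \|\Phi_1^{n-1}(0) - \Phi_2^{n-1}(0)\| + \eta$, and induction gives $n\eta$. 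Dividing by $n$ and passing to the limit $n\to\infty$ (using hypothesis (ii) applied to both $\Psi_k$ and $\Psi_\ell$) yields $\|v^k - v^\ell\| \le \|\Psi_k - \Psi_\ell\|_\infty$. Since $(\Psi_k)$ converges uniformly it is Cauchy for $\|\cdot\|_\infty$ (at least eventually $\|\Psi_k-\Psi_\ell\|_\infty < \infty$, and this quantity $\to 0$), so $(v^k)$ is Cauchy in the Banach space $\mathcal X$; its limit $v$ lies in $\K$ because $\K$ is closed and each $v^k \in \K$.

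Finally I would estimate $\|\frac1n\Psi^n(0) - v\|$ by inserting $\frac1n\Psi_k^n(0)$ and $v^k$ as intermediate points:
\[
\left\|\tfrac1n\Psi^n(0) - v\right\| \le \left\|\tfrac1n\Psi^n(0) - \tfrac1n\Psi_k^n(0)\right\| + \left\|\tfrac1n\Psi_k^n(0) - v^k\right\| + \|v^k - v\|.
\]
The first term is at most $\|\Psi - \Psi_k\|_\infty$ by the telescoping estimate above (divided by $n$, it is $\le \frac{n\|\Psi-\Psi_k\|_\infty}{n} = \|\Psi-\Psi_k\|_\infty$), the third is $\le \|\Psi_k - \Psi\|_\infty$ by the Cauchy bound just proved (or directly $\|v^k - v\| \le \sup_{\ell \ge k}\|\Psi_k - \Psi_\ell\|_\infty \to 0$), and these two are made $< \varepsilon$ by choosing $k$ large; then for that fixed $k$, hypothesis (ii) makes the middle term $< \varepsilon$ for $n$ large. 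Hence $\frac1n\Psi^n(0) \to v$. I do not anticipate a serious obstacle here — the only mild subtlety is bookkeeping the order of quantifiers (choose $k$ first to control the $k$-uniform terms, then $n$), and making sure the telescoping inequality $\|\Phi_1^n(0)-\Phi_2^n(0)\| \le n\|\Phi_1-\Phi_2\|_\infty$ is stated cleanly since it is used twice; everything else is a routine $3\varepsilon$ argument in a Banach space.
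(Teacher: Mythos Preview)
Your proof is correct and follows essentially the same approach as the paper's: the key telescoping/induction estimate $\|\Phi_1^n(0)-\Phi_2^n(0)\|\le n\|\Phi_1-\Phi_2\|_\infty$, the Cauchy argument for $(v^k)$ obtained by dividing by $n$ and letting $n\to\infty$, and the final $3\varepsilon$ splitting with intermediate points $\tfrac{1}{n}\Psi_k^n(0)$ and $v^k$ are exactly what the paper does. The only (cosmetic) difference is that you spell out the nonexpansiveness of $\Psi$ explicitly, whereas the paper merely asserts it.
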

\begin{proof}{Proof.}  Take $\epsilon>0$. Note first, that if $\Phi_1,\Phi_2$ are two nonexpansive mappings such that $\| \Phi_1-\Phi_2 \|_{\infty}\leq \epsilon$, we have $\| \Phi_1^n-\Phi_2^n \|_{\infty}\leq n\epsilon$. This follows indeed from an induction argument. The result obviously holds for $n=1$, so assume that $n\geq 2$ and consider that the inequality holds at $n-1$. For all $f$ in $\K$,  we have
\begin{eqnarray} \nonumber
\|\Phi_1^n(f)-\Phi_2^n(f)\|& \leq &\|\Phi_1(\Phi_1^{n-1}(f))-\Phi_1(\Phi_2^{n-1}(f))\| +\|\Phi_1(\Phi_2^{n-1}(f))-\Phi_2(\Phi_2^{n-1}(f))\|\\ \nonumber
& \leq &\|\Phi_1^{n-1}(f)-\Phi_2^{n-1}(f)\|+\epsilon\\
 & \leq & n\epsilon.\label{ineq}
\end{eqnarray}
Let us now prove that $v^k$ is a Cauchy sequence.  Let $N>0$ be such that $\| \Psi_p-\Psi_q \|_{\infty}\leq \epsilon$, for all $p,q\geq N$. Then, for each $p,q\geq N$ and  each positive integer $n$, we have
\begin{equation*}
\|\frac{\Psi_p^n(0)}{n}-\frac{\Psi_q^n(0)}{n}\| \leq  \epsilon.
\end{equation*}
Letting $n$ goes to infinity ($p$ and $q$ are fixed), one gets $\|v^p-v^q\|  \leq\epsilon$ and thus $v^k$ converges to  a vector $v$ belonging to $\K$.

Take $\epsilon>0$. Let $N$ be such that $\| \Psi_p-\Psi \|_{\infty}\leq \epsilon/3$ and $\|v^p-v\|<\epsilon/3$ for all $p\geq N$. Using \eqref{ineq}, one obtains  $\|\Psi_p^n(0)-\Psi^n(0)\|\leq n\,\epsilon/3$ where $n>0$ is an arbitrary integer. Whence
\begin{eqnarray*}
\|v-\frac{\Psi^n(0)}{n}\| & \leq & \|v-v^p\|+\|v^p-\frac{\Psi_p^n(0)}{n}\|+\|\frac{\Psi_p^n(0)}{n}-\frac{\Psi^n(0)}{n}\|\\
 & \leq & \frac{2\epsilon}{3}+\|v^p-\frac{\Psi_p^n(0)}{n}\|,\\
\end{eqnarray*}
for all $n>0$.
The conclusion follows by choosing $n$ large enough. 
\end{proof}

\smallskip

\noindent
Similarly, we prove:
\begin{proposition}\label{t:uniflambda} Let  $\Psi_k:\K \rightarrow \K$ be a sequence of nonexpansive mappings. Assume that\\
 (i) There exists $\Psi:\K \rightarrow \K$ such that $\Psi_k$ converges uniformly to $\Psi$ as $k\rightarrow+\infty$,\\
  (ii) for each fixed integer $k$, the family of fixed point $v_\lambda^k:=\lambda \Psi_k\left(\frac{1-\lambda}{\lambda}v_\lambda^k\right)$ has a limit $v^k$ in $\K$ as $\lambda\rightarrow 0$.

 Then the sequence $v^k$ has a limit $v$ in $\K$, $\Psi$ is nonexpansive and $v_\lambda:=\lambda \Psi\left(\frac{1-\lambda}{\lambda}v_\lambda\right)$ converges to $v$ as $k$ goes to infinity.
 \end{proposition}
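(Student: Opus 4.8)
The plan is to mirror the proof of Proposition~\ref{t:unif}, replacing the iterate estimate~\eqref{ineq} by a contraction estimate for discounted fixed points that is \emph{uniform} in $\lambda$. First I would record the basic fact underlying everything: for any nonexpansive $\Phi:\K\to\K$ and any $\lambda\in(0,1]$, the map $f\mapsto\Phi((1-\lambda)f)$ sends the closed cone $\K$ into itself and is $(1-\lambda)$-Lipschitz, hence has a unique fixed point $w_\lambda(\Phi)\in\K$ by the Banach fixed point theorem; and $v_\lambda^k=\lambda\,w_\lambda(\Psi_k)$, while the discounted fixed point of the limit operator is $v_\lambda=\lambda\,w_\lambda(\Psi)$, which is therefore well defined. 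The key estimate is then: if $\Phi_1,\Phi_2:\K\to\K$ are nonexpansive with $\|\Phi_1-\Phi_2\|_\infty\le\epsilon$, then $\|\lambda w_\lambda(\Phi_1)-\lambda w_\lambda(\Phi_2)\|\le\epsilon$ for every $\lambda\in(0,1]$. This follows by rearranging the one-line bound
\[
\|w_\lambda(\Phi_1)-w_\lambda(\Phi_2)\|\le\|\Phi_1((1-\lambda)w_\lambda(\Phi_1))-\Phi_1((1-\lambda)w_\lambda(\Phi_2))\|+\|\Phi_1((1-\lambda)w_\lambda(\Phi_2))-\Phi_2((1-\lambda)w_\lambda(\Phi_2))\|\le(1-\lambda)\|w_\lambda(\Phi_1)-w_\lambda(\Phi_2)\|+\epsilon .
\]

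With this estimate in hand the argument is essentially identical to that of Proposition~\ref{t:unif}. To see that $(v^k)_k$ is Cauchy, fix $\epsilon>0$ and pick $N$ with $\|\Psi_p-\Psi_q\|_\infty\le\epsilon$ for all $p,q\ge N$; the key estimate gives $\|v_\lambda^p-v_\lambda^q\|\le\epsilon$ for \emph{every} $\lambda\in(0,1]$, and letting $\lambda\to0$ (hypothesis (ii)) yields $\|v^p-v^q\|\le\epsilon$. Hence $v^k\to v$ for some $v\in\K$, since $\K$ is closed. That $\Psi$ is nonexpansive is immediate, being a pointwise limit of nonexpansive maps. Finally, for any $\lambda\in(0,1]$ and any index $p$,
\[
\|v_\lambda-v\|\le\|v_\lambda-v_\lambda^p\|+\|v_\lambda^p-v^p\|+\|v^p-v\| ;
\]
given $\epsilon>0$, choose $p$ large enough that $\|\Psi_p-\Psi\|_\infty\le\epsilon/3$, so that the first term is $\le\epsilon/3$ uniformly in $\lambda$ by the key estimate, and also $\|v^p-v\|\le\epsilon/3$; then, for that fixed $p$, choose $\lambda$ small enough that $\|v_\lambda^p-v^p\|\le\epsilon/3$ by hypothesis (ii). This gives $\|v_\lambda-v\|\le\epsilon$, i.e.\ $v_\lambda\to v$ as $\lambda\to0$.

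I do not expect a genuine obstacle. The only substantive point is the uniform-in-$\lambda$ contraction estimate, and it is the appearance of the factor $\lambda$ on its left-hand side — rather than a factor growing with the horizon, as in the Ces\`aro iterate bound~\eqref{ineq} — that makes the three-$\epsilon$ argument go through with no rescaling subtlety; in that sense the discounted case is slightly cleaner than the one treated in Proposition~\ref{t:unif}. The one thing worth double-checking when writing the details is that hypothesis (ii) is used legitimately twice: once to pass to the limit $\lambda\to0$ in the Cauchy estimate for $(v^k)$, and once more, for a single fixed large index $p$, in the last display.
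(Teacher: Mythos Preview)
Your proposal is correct and follows essentially the same route as the paper: the paper derives the identical contraction inequality $\|v_\lambda^p-v_\lambda^q\|\le\lambda\epsilon+(1-\lambda)\|v_\lambda^p-v_\lambda^q\|$, hence $\|v_\lambda^p-v_\lambda^q\|\le\epsilon$ uniformly in $\lambda$, and then runs the same Cauchy and three-term arguments. Your write-up is in fact slightly more complete, since you explicitly justify the well-definedness of $v_\lambda$ via Banach's theorem and the nonexpansiveness of $\Psi$ as a pointwise limit, points the paper leaves implicit.
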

\begin{proof}{Proof.}
Take $\epsilon>0$. Let $N>0$ be such that $\| \Psi_p-\Psi_q \|_{\infty}\leq \epsilon$, for all $p,q\geq N$. Then, for each $p,q\geq N$ and  any $\lambda\in]0,1]$, we have
\begin{eqnarray*}
\|v_\lambda^p-v_\lambda^q\|& = &\lambda \left\|\Psi_p\left(\frac{1-\lambda}{\lambda}v_\lambda^p\right)-\Psi_q\left(\frac{1-\lambda}{\lambda}v_\lambda^q\right)\right\|\\
& \leq &\lambda  \left\|\Psi_p\left(\frac{1-\lambda}{\lambda}v_\lambda^p\right)-\Psi_q\left(\frac{1-\lambda}{\lambda}v_\lambda^p\right)\right\| +\lambda \left\|\Psi_q\left(\frac{1-\lambda}{\lambda}v_\lambda^p\right)-\Psi_q\left(\frac{1-\lambda}{\lambda}v_\lambda^q\right)\right\|\\
    & \leq &\lambda \epsilon +(1-\lambda)\|v_\lambda^p-v_\lambda^q\|.
\end{eqnarray*}
so $\|v_\lambda^p-v_\lambda^q\|\leq\epsilon$.

Letting $\lambda$ to 0, we get that $v^k$ is a Cauchy sequence, hence converges to some $v$. Moreover, for any $p>N$,
\begin{eqnarray*}
\|v-v_\lambda\| & \leq & \|v-v^p\|+\|v^p-v_\lambda^p\|+\|v_\lambda^p-v_\lambda\|\\
 & \leq & 2\epsilon + \|v^p-v_\lambda^p\|
\end{eqnarray*}
for all $\lambda\in]0,1]$. Hence $v_\lambda$ converges to $v$. 
\end{proof}

\begin{proof}{[Proof of Theorem~\ref{t:valuebis}]} Let $k$ be a positive integer. {From} the Stone-Weierstrass theorem (see \cite{Choq66}), there exists a finite family $\{\pi_k(\cdot,\omega);\omega\in\Omega\}$ of real polynomial functions
\begin{equation}\label{eqpi}
\pi_k(x,y,\omega)=\sum_{i,\; j \mbox{ multi-index lower than } \, \delta^{\omega}_k} m^k_{ij}(\omega) \, x^i y^j
\end{equation}
with $\delta_k^{\omega}$ in $\N^*$, $m_{ij}^k(\omega)$ in $\R$ and $(x,y)$ in $X\times Y\subset\R^p\times \R^q$, such that
$$\sup_{\omega\in\Omega}\sup\left\{|\pi_k(x,y,\omega)-r(x,y,\omega)|:(x,y)\in X\times Y\right\}\;\leq\; \frac{1}{k}.$$
Consider now, for each positive $k$, the game given by $(\Omega,X,Y,\pi_k,\rho)$. Since this game is definable,  Proposition~\ref{p:sep} applies and the game has a value. In other words its Shapley operator $\Psi_k:\R^d\rightarrow\R^d$ (recall that the cardinality of $\Omega$ is $d$) is such that the sequence  $\frac{1}{n}\Psi_k^n(0)$ has a limit as $n$ goes to $+\infty$.
On the other hand, one easily sees that
$$\Psi(f)-\frac{1}{k}\leq \Psi_k(f) \leq \Psi(f) +\frac{1}{k}$$
whenever $f$ is in $\R^d$ and $k$ is positive. This proves that $\Psi_k$ converges uniformly to $\Psi$. Thus by using Proposition~\ref{t:unif} and Proposition~\ref{t:uniflambda} , we obtain the existence of a common limit $v$ in $\R^d$ of the sequence  $v_n=\frac{1}{n}\Psi^n(0)$ and of the family of fixed points $v_\lambda$.
\end{proof}


Let us now establish the stronger version of our result.

\begin{proof}{[Proof of Theorem \ref{t:value}]}
Let $k$ be a positive integer. As before we consider a finite family
of real polynomial functions, $\{\pi_k(\cdot,\omega);\omega\in\Omega\}$,
such that
\begin{equation}\label{eqpi2}
\sup_{\omega\in\Omega}\sup\left\{|\pi_k(x,y,\omega)-r(x,y,\omega)|:(x,y)\in X\times Y\right\}\;\leq\; \frac{1}{k}.
\end{equation}
Consider now, for each positive $k$, the game $\Gamma^k$ given by $(\Omega,X,Y,\pi_k,\rho)$. Since this game is definable,  Theorem~\ref{t:sepgamesunif} applies and the game has a uniform value $v^k$. Hence, there exists an integer $N$ (depending on $k$) and a strategy $\sigma$ of Player 1 which is $\frac{1}{k}$ optimal in the $n$-stage game $\Gamma^k_n$ for any $n\geq N$. That is, for any strategy $\tau$ of Player 2 and any starting state $\omega$,
\[
\gamma^k_n(\sigma,\tau,\omega)\geq v^k(\omega)-\frac{1}{k}.
\]
Hence by \eqref{eqpi2},
\begin{equation}\label{eqgammavk}
\gamma_n(\sigma,\tau,\omega)\geq v^k(\omega)-\frac{2}{k}.
\end{equation}
Taking the infimum over all possible strategies $\tau$, we get that for every $\omega$ and every large $n$,
 \[
 v_n(\omega)\geq v^k(\omega)-\frac{2}{k}.
 \]
Using the dual inequality
\begin{equation}
v_n(\omega)\leq v^k(\omega)+\frac{2}{k}\label{eqvnvk}
\end{equation}
one gets that $\limsup v_n(\omega)-\liminf v_n(\omega)\leq \frac{4}{k}$. Hence $v_n$ converges to some $v$. Moreover, combining  \eqref{eqgammavk} and \eqref{eqvnvk} yields
\[
\gamma_n(\sigma,\tau,\omega)\geq v_n(\omega)-\frac{4}{k}\geq v(\omega)-\frac{5}{k}
\]
for $n$ sufficiently large. Hence $v$ is the uniform value of the game. 
\end{proof}

\medskip

An immediate consequence of Theorem~\ref{t:value} is the following (\footnote{After this article was first submitted, examples were constructed in \cite{Ziliotto13, sorinvigeral13reversibility} that show that the definability assumption for the games described in this corollary cannot be removed.})
\begin{corollary}\label{coroswitch}
Any game with a definable transition probability, and either switching control or finitely many actions on one side, has a uniform value.
\end{corollary}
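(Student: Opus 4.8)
The plan is to read both cases off Theorem~\ref{t:value}. What makes this work is that Theorem~\ref{t:value} asks for the transition function to be definable \emph{and separable} while only requiring the reward to be continuous; so the whole task is to check that, in each situation, one lands — after a harmless reformulation — within its hypotheses. No genuinely new idea is needed, which is why the corollary is ``immediate''.

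\emph{Switching control.} Let $(\Omega,X,Y,g,\rho)$ be a game with switching control; as everywhere in this section $X$ and $Y$ are definable and $g$ is continuous, and by hypothesis $\rho$ is definable. Fix a state $\omega$ and a target state $\omega'$. By definition of switching control the action of one of the two players does not influence the transition out of $\omega$; if $\omega$ is controlled by Player~$1$ this means $\rho(\omega'|x,y,\omega)=c_{\omega,\omega'}(x)$ for some $c_{\omega,\omega'}$, which is definable because $\rho$ is. I would then simply note that
$$\rho(\omega'|x,y,\omega)=c_{\omega,\omega'}(x)\cdot 1$$
is of the separable form required of a separably definable transition (one term, $d_1\equiv 1$, $n_{11}=1$), and symmetrically when $\omega$ is controlled by Player~$2$. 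Hence $\rho$ is separably definable, $g$ is continuous by the standing assumption $(\mathcal A)$, and $X,Y$ are definable: Theorem~\ref{t:value} applies and the game has a uniform value.

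\emph{Finitely many actions on one side.} Say Player~$2$ has a finite action set $Y=\{y_1,\dots,y_m\}$ (the case of Player~$1$ is symmetric). Here $\rho$ need not be separable in $(x,y)$, so I would first pass to the game $(\Omega,X,\Delta_m,\tilde g,\tilde\rho)$ in which Player~$2$ plays a mixed action $\nu=(\nu_1,\dots,\nu_m)$ in the standard simplex $\Delta_m\subset\R^m$, with
$$\tilde g(x,\nu,\omega)=\sum_{j=1}^m\nu_j\,g(x,y_j,\omega),\qquad \tilde\rho(\omega'|x,\nu,\omega)=\sum_{j=1}^m\nu_j\,\rho(\omega'|x,y_j,\omega).$$
Now $\Delta_m$ is semi-algebraic, hence definable; $\tilde g$ is continuous; and $\tilde\rho$, being a finite sum of products of the definable functions $x\mapsto\rho(\omega'|x,y_j,\omega)$ with the semi-algebraic coordinate functions $\nu\mapsto\nu_j$, is definable and \emph{separable} in $(x,\nu)$. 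Theorem~\ref{t:value} therefore endows this reformulated game with a uniform value. Finally I would observe that the original game has the same Shapley operator as the reformulated one --- the inner minimisation over $\Delta(Y)$ in \eqref{op_shapley} is literally the minimisation over $\Delta_m$, since the bracket is affine in the mixed action --- so $v_n$ and $v_\lambda$, and hence the uniform value, coincide for the two games; the uniform value just obtained is thus the uniform value of the original game.

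I do not expect any serious obstacle. The two points to be careful about are the elementary but decisive remark that a function of a single player's action is separable, which is what admits switching-control games into the scope of Theorem~\ref{t:value}, and the fact that reducing the finite side to its mixed-action simplex turns a merely definable transition into a \emph{separable} definable one while leaving the value of the game untouched --- the gain in separability being bought precisely by the finiteness of that action set, and the invariance of the value being the standard fact that treating mixed actions as pure actions changes nothing essential (compare the remark following Corollary~\ref{c:shapdef}).
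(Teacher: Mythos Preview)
Your treatment of the switching-control case is exactly the intended one: a transition that depends on only one player's action is trivially of the separable form, so Theorem~\ref{t:value} applies directly.

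For the finite-actions case, however, you take an unnecessary detour that leaves a genuine gap. The paper's ``immediate'' route is simply this: if $Y=\{y_1,\dots,y_m\}$ is finite, then \emph{any} definable function on $X\times Y$ is already separably definable in the original variables, since
\[
\rho(\omega'\mid x,y,\omega)=\sum_{j=1}^m \rho(\omega'\mid x,y_j,\omega)\,\mathbb{1}_{\{y=y_j\}},
\]
where each section $x\mapsto\rho(\omega'\mid x,y_j,\omega)$ is continuous and definable and each indicator $\mathbb{1}_{\{y=y_j\}}$ is continuous on the finite set $Y$ and has a finite (hence semi-algebraic) graph. Thus Theorem~\ref{t:value} applies to the \emph{original} game $(\Omega,X,Y,g,\rho)$ with no reformulation.

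By contrast, your passage through the mixed extension $(\Omega,X,\Delta_m,\tilde g,\tilde\rho)$ produces a uniform value for that auxiliary game, and you then assert that ``$v_n$ and $v_\lambda$, and hence the uniform value, coincide for the two games''. The first clause is correct---equal Shapley operators force equal $v_n$ and $v_\lambda$---but the second does not follow. Existence of a uniform value is a statement about \emph{strategies}, and strategies in the two games are defined on different history spaces: in the original game Player~1 observes the realised $y_t\in Y$ (and the realised stage payoff $g(x_t,y_t,\omega_t)$), whereas in the reformulated game Player~1 observes $\nu_t\in\Delta_m$ (and the averaged payoff $\tilde g(x_t,\nu_t,\omega_t)$). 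A near-optimal strategy built for one set of observables does not transfer to the other without further argument; the remark you cite after Corollary~\ref{c:shapdef} concerns robustness of Mertens--Neyman strategies to \emph{reduced} observation within a fixed game, not equivalence of two games with different action sets and histories. The gap can be closed, but not by the sentence you wrote---and it is entirely avoided by the one-line observation above that a definable transition with a finite action set on one side is already separably definable.
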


\subsection{Geometric growth in nonlinear Perron-Frobenius theory}\label{sec-geom}
We finally point out an application of the
present results to nonlinear Perron-Frobenius theory,
in which Shapley operators do appear, albeit after
a change of variables, using ``log-glasses~\cite{viro}.
In this setting, the mean payoff of the game determines the growth rate
of a population model.
The same Shapley operators arise in risk-sensitive control, where
the mean payoff problem is also of interest. Whereas
the importance of the o-minimal model of real semi-algebraic sets
is well known in game theory~\cite{bewkohl,Ney03}, the present application
show that there are natural Shapley operators which are definable
in a larger structure, the log-exp o-minimal model.

We denote by $C=\mathbb{R}_+^d$ the standard (closed)
nonnegative cone of $\mathbb{R}^d$, equipped with the
product ordering. We are interested
in maps $T$ defined on the interior of $C$,
satisfying some of the following properties.
We say that $T$ is {\em order preserving} if
\[
f\leq g\implies T(f)\leq T(g), \qquad \forall f,g\in \operatorname{int} C,
\]
that it is
{\em positively homogeneous} (of degree $1$) if
\[
T(\lambda f) = \lambda T(f), \qquad \forall f\in \operatorname{int}C, \, \forall \lambda >0,
\]
and {\em positively subhomogeneous} if
\[
T(\lambda f) \leq  \lambda T(f), \qquad \forall f\in \operatorname{int}C,\, \forall \lambda \geq 1.
\]
Let $\log: \operatorname{int} C\to \mathbb{R}^d$ denote the map which
does $\log$ entrywise, and let $\exp:= \log^{-1}$.
It is clear that $T$ is order-preserving and positively homogeneous
if and only if the conjugate map
\begin{align}\label{e-fromTtoShapley}
\Psi:= \log \circ\, T \circ \exp
\end{align}
is order-preserving and commutes with the addition of a constant.
These two properties
hold if and only if $\Psi$ is a dynamic programming
operator
associated to an undiscounted game with state space $\{1,\dots,d\}$,
i.e.\ if $\Psi$ can be written as in~\eqref{op_shapley}, but with
possibly noncompact sets of actions (see in particular~\cite{kolokoltsov}).
Note also that if $T$ is order preserving
and positively subhomogeneous, then, $\Psi$ is sup-norm
nonexpansive.

In the setting of nonlinear Perron-Frobenius theory, we are
interested in the existence of the geometric {\em growth rate} $\chi(T)$, defined by
\begin{align}
\chi(T):= \exp(\lim_{n\to \infty} n^{-1}\log T^n(e)) = \exp( \lim_{n\to \infty} n^{-1} \Psi^n(\log e))\label{e-corresp}
\end{align}
where $e$  is an arbitrary vector in the interior of $C$.

Problems of this nature arise in population dynamics. In this
context, one considers a population vector $f(n) \in \operatorname{int}\mathbb{R}_+^d$, where $[f(n)]_i$ represents the number of individuals
of type $i$ at  time $n$, assuming a dynamics of the form $f(n)=T(f(n-1))$.
Then, $[\chi(T)]_i= \lim_{n\to \infty} [T^n(f(0))]_i^{1/n}$ represents
the geometric growth rate of individuals of type $i$.


\begin{corollary}[Geometric Growth]\label{cor-geom}
Let $T$ be an order preserving and positively subhomogeneous
self map of $\operatorname{int} C$
that is definable in the log-exp structure, and let $e$ be a vector in $\operatorname{int}C$.
Then, the growth rate $\chi(T)$, defined by~\eqref{e-corresp}, does exist and is independent of the choice of~$e$.
\end{corollary}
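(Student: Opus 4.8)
The plan is to reduce the corollary to the abstract convergence result of Theorem~\ref{conv} by passing through the ``log-glasses'' conjugation. First I would set $\Psi := \log \circ\, T \circ \exp$, as in~\eqref{e-fromTtoShapley}. Since $T$ is definable in the log-exp structure and both $\log$ and $\exp$ (entrywise) are definable in that structure, the composition $\Psi$ is definable in the log-exp structure by the stability of definable maps under composition (Proposition~\ref{p:image}). Next, as recalled in the paragraph preceding the corollary, the hypotheses that $T$ is order preserving and positively subhomogeneous translate, through the conjugation, into the statement that $\Psi$ is nonexpansive for the sup-norm on $\R^d$: indeed order preservation plus subhomogeneity is equivalent to the sup-norm nonexpansiveness of $\Psi$ (this is the standard Crandall--Tartar type characterization). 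So $\Psi:\R^d\to\R^d$ is a definable nonexpansive self-map of the closed cone $\K=\R^d$, which is trivially definable.

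Then I would apply Theorem~\ref{conv}(i) with $\K=\R^d$: there exists $v\in\R^d$ such that for every $f\in\R^d$, $\frac1n\Psi^n(f)\to v$ as $n\to\infty$, and crucially $v$ does not depend on $f$. Taking $f=\log e$ for an arbitrary $e\in\operatorname{int}C$ gives $\lim_{n\to\infty} n^{-1}\Psi^n(\log e)=v$, independent of $e$. Finally I would transport this back through the exponential: by~\eqref{e-corresp},
\[
\chi(T)=\exp\bigl(\lim_{n\to\infty} n^{-1}\Psi^n(\log e)\bigr)=\exp(v),
\]
which exists and is independent of the choice of $e\in\operatorname{int}C$. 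This completes the argument.

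The only genuinely delicate point is the equivalence between the order-theoretic hypotheses on $T$ and the sup-norm nonexpansiveness of $\Psi$; but this is exactly the content of the sentence ``if $T$ is order preserving and positively subhomogeneous, then $\Psi$ is sup-norm nonexpansive'' stated just before the corollary, so it may be invoked directly. Everything else is a routine application of the composition-stability of definable maps and of Theorem~\ref{conv}. I do not expect the polynomially bounded refinement (part (ii) of Theorem~\ref{conv}) to play any role here, since the log-exp structure is not polynomially bounded and the corollary only asserts existence of the growth rate, not a rate of convergence.
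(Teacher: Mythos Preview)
Your argument is correct and follows exactly the paper's own proof: conjugate by $\log$/$\exp$ to obtain a sup-norm nonexpansive, log-exp definable self-map $\Psi$ of $\R^d$, then invoke Theorem~\ref{conv}(i) and translate back via~\eqref{e-corresp}. The only difference is that you spell out a few routine points (composition-stability for definability, independence on $e$ via independence on $f$) that the paper leaves implicit.
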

\begin{proof}{Proof.} Apply Theorem~\ref{conv} to the operator~\eqref{e-fromTtoShapley},
which is nonexpansive in the sup-norm as well as definable in the log-exp structure, and use~\eqref{e-corresp}. 
\end{proof}

Here is now an application of Corollary~\ref{cor-geom} to
a specific class of maps.
\begin{corollary}[Growth minimization]
Assume that $T$ is a self-map of $\operatorname{int} C$
every coordinate of which can be written as
\begin{align}
[T(f)]_i = \inf_{p\in \mathcal{M}_i } \langle p,f\rangle  \qquad 1\leq i\leq d,
\label{e-T}
\end{align}
where $\mathcal{M}_i$ is a subset of $C$. Assume
in addition that each set $\mathcal{M}_i$
is definable in the log-exp structure.
Then, the growth rate $\chi(T)=\exp(\lim_{n\to \infty} n^{-1}\log T^n(e))$ does exist
and is independent of the choice of $e\in \operatorname{int} C$.
\end{corollary}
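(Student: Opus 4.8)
The plan is to reduce the statement to Corollary~\ref{cor-geom}: I would check that the map $T$ is order preserving, positively subhomogeneous, and definable in the log-exp structure, and then read off the existence and independence of the growth rate from the correspondence~\eqref{e-corresp}.

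First I would record the elementary order-theoretic and homogeneity properties. Since each $\mathcal{M}_i$ is a subset of $C=\R_+^d$, every linear form $f\mapsto\langle p,f\rangle$ with $p\in\mathcal{M}_i$ is nondecreasing with respect to the product order; an infimum of nondecreasing functions is nondecreasing, so $f\leq g$ implies $[T(f)]_i\leq[T(g)]_i$ for each $i$, i.e.\ $T$ is order preserving. Likewise $\langle p,\lambda f\rangle=\lambda\langle p,f\rangle$ for $\lambda>0$ gives $[T(\lambda f)]_i=\lambda[T(f)]_i$, so $T$ is positively homogeneous of degree $1$ and in particular positively subhomogeneous. Here I use that $T$ is assumed to map $\operatorname{int}C$ into itself, which forces each $\mathcal{M}_i$ to be nonempty and each $[T(f)]_i$ to be a finite positive real; no attainment of the infimum is needed.

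The only step requiring some care is the definability of $T$ in the log-exp structure. Fixing $i$, I would describe the graph of the coordinate map $f\mapsto[T(f)]_i$ as the set of pairs $(f,r)\in\operatorname{int}C\times\R$ satisfying $\big(\forall p\in\mathcal{M}_i,\ \langle p,f\rangle\ge r\big)$ and $\big(\forall\varepsilon>0,\ \exists p\in\mathcal{M}_i,\ \langle p,f\rangle<r+\varepsilon\big)$. Since $\operatorname{int}C$ is semi-algebraic, $\mathcal{M}_i$ is definable in the log-exp structure, and $(p,f)\mapsto\langle p,f\rangle$ is polynomial hence definable, this is a first order definable formula --- it is precisely the formula analysed in Example~\ref{ex} for the infimum of a definable function over a definable set. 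Proposition~\ref{p:FO1} then yields that this graph is definable in the log-exp structure, so each coordinate of $T$ is definable there; intersecting the $d$ coordinate graphs shows that $T$ itself is definable in the log-exp structure.

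With the three hypotheses verified, I would simply invoke Corollary~\ref{cor-geom} applied to $T$ (equivalently, Theorem~\ref{conv} applied to the conjugate operator $\Psi=\log\circ\,T\circ\exp$, which is sup-norm nonexpansive and definable in the log-exp structure), and conclude via~\eqref{e-corresp} that $\chi(T)=\exp(\lim_{n\to\infty}n^{-1}\log T^n(e))$ exists and does not depend on $e\in\operatorname{int}C$. I expect the main --- indeed the only genuine --- obstacle to be the handling of the infimum over a possibly unbounded set $\mathcal{M}_i$ in the definability argument, but this is already covered by the first order formula technique of Example~\ref{ex}, so no new idea is required and everything else is immediate.
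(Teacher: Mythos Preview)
Your proposal is correct and follows exactly the same route as the paper: verify that $T$ is order preserving and positively (sub)homogeneous, establish definability of each coordinate $[T(\cdot)]_i$ in the log-exp structure via the first order formula treatment of an infimum (Example~\ref{ex} / Proposition~\ref{p:FO1}), and then apply Corollary~\ref{cor-geom}. Your write-up simply makes explicit what the paper labels ``obvious'', and your remark that the standing assumption $T(\operatorname{int}C)\subset\operatorname{int}C$ guarantees finiteness and positivity of each infimum is a helpful clarification the paper leaves implicit.
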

\begin{proof}{Proof.}
The map $T$ is obviously order preserving, positively homogeneous,
and, by Proposition~\ref{p:FO1} or Example~\ref{ex}, it is definable in the log-exp structure
as soon as every set $\mathcal{M}_i$ is definable in this structure.
Hence, the result
follows from Corollary~\ref{cor-geom}. 
\end{proof}

\smallskip

Several motivations lead to consider maps of the form~\eqref{e-T}.
The first motivation arises from discrete time controlled growth processes.
As above, to each time $n\geq 1$ and state $1\leq i\leq d$ is attached a population
$[f(n)]_i$. The control
at time $n$ is chosen after observing
the current state $1\leq i\leq d$. It consists
in selecting a vector $p\in \mathcal{M}_i$. Then, the population
at time $i$ becomes $[f(n)]_i =\langle p, f(n-1)\rangle$. The iterate $[T^n(e)]_i$
represents the minimal possible population at state $i$ and time $n$,
with an initial population $e$. Then, the limit $\chi(T)$
represents the minimal possible growth rate.
This is motivated in particular by some therapeutic problems
(see e.g~\cite{billy}), for which $\chi(T)$ yields
a lower bound on the achievable growth rates.

Another motivation comes from risk sensitive control~\cite{fleming,hernandez} or from  mathematical finance models
with logarithmic utility~\cite{akian}. In this context,
it is useful to consider the conjugate map $\Psi:= \log \circ \,T \circ \exp$,
which has
the following explicit representation
\begin{align}
[\Psi(h)]_i & = \inf_{p\in \mathcal{M}_i } \log(\sum_{1\leq j\leq d} p_j e^{h_j})
%
= \inf_{p\in \mathcal{M}_i } \sup_{q\in \Delta_d}
(-S(q,p) + \langle q,h\rangle )\label{e-explicit}
\end{align}
where
\[S(q,p):= \sum_{1\leq j \leq d} q_j \log(q_j/p_j)
\]
denotes the {\em relative entropy} or {\em Kullback-Leibler divergence},
and $\Delta_d:=\{q\in C\mid \sum_{1\leq j\leq d} q_j=1\}$
is the standard simplex.
Then, $\log [\chi(T)]_i$
can be interpreted as the value of an ergodic risk sensitive problem,
and it is also the value of a zero-sum game.

The case in which inf is replaced by sup in~\eqref{e-T}, i.e.,
$[T(f)]_i = \sup_{p\in \mathcal{M}_i } \langle p,f\rangle$,
for $1\leq i\leq d$, which is also of interest, turns out to be simpler.
Indeed, each coordinate of the operator $\Psi:= \log \circ \,T \circ \exp$ becomes convex (this can be easily seen
from the representation analogous to~\eqref{e-explicit},
in which the infimum is now replaced by a supremum).
More generally, the latter convexity property is known to hold
if and only if $\Psi$ is the dynamic programming operator
of a one player stochastic game~\cite{spectral,vigeral}.
It has been shown by several authors~\cite{gg04,vigeral,Rena11} that
for this class of operators (or games), the limit
$\lim_{n\rightarrow+\infty} \Psi^n(f)/n$ does exist,
from which the existence of the limit~\eqref{e-corresp} readily
follows.

Finally, we note that we may consider more general hybrid versions of~\eqref{e-T},
for instance with a partition $\{1,\dots,d\}=I\cup J$ and
\[
[T(f)]_i  = \inf_{p\in \mathcal{M}_i } \langle p,f\rangle  \qquad i\in I,
\qquad [T(f)]_i  = \sup_{p\in \mathcal{M}_i } \langle p,f\rangle  \qquad i\in J \enspace .
\]
Then the existence of the growth rate, for such maps, also
follows from Corollary~\ref{cor-geom}.

\bigskip

\noindent


%


%
%
%




\section*{Acknowledgments.}
The authors would like to thank J. Renault, S. Sorin and X. Venel for their very useful comments.

\bibliography{ominimal}
\bibliographystyle{amsplain.bst}





\end{document}